\newtheorem{theorem}{Theorem}
\def\bigO{\mathcal{O}}
\def\bigO{\mathcal{O}}
\title{Approximate Taylor methods for ODEs}
\journal{Computers \& Fluids}
\begin{document}

\begin{frontmatter}
\author[uv]{A.~Baeza}\ead{antonio.baeza@uv.es}
\author[uc] {S.~Boscarino }\ead{boscarino@dmi.unict.it}
\author[uv]{P.~Mulet}\ead{mulet@uv.es}
\author[uc]{G.~Russo}\ead{russo@dmi.unict.it}
\author[udec]{D.~Zor\'{\i}o}\ead{dzorio@ci2ma.udec.cl}
\date{}

\address[uv]{Departament de Matemàtiques,  Universitat de
	València. Av. Vicent Andr\'es Estell\'es, 46100 Burjassot  (Spain)}
\address[uc]{Department of Mathematics and Computer Science, University of Catania, 
  95125 Catania (Italy)}
\address[udec]{Centro de Investigación en Ingeniería Matemática (CI$^2$MA), Universidad de Concepción. Casilla 160-C, Concepción (Chile)}

\leavevmode\thispagestyle{empty}

\noindent The version of record of this article, first published in Computers \& Fluids, is available online at Publisher’s website: \url{https://doi.org/10.1016/j.compfluid.2017.10.001}

\newpage

\begin{abstract}
A new method for the numerical solution of ODEs is presented. This approach
is based on an approximate formulation of the Taylor methods that has a much
easier implementation than the original Taylor methods, since only the functions
in the ODEs, and not their derivatives, are needed, just as in classical Runge-Kutta schemes. 
Compared to Runge-Kutta methods, the number of function evaluations to achieve 
a given order is higher, however with the present procedure it is much easier to produce 
arbitrary high-order schemes, which may be important in some applications. 
In many cases the new approach leads
to an asymptotically lower computational cost when compared to the Taylor expansion based on exact derivatives.
The numerical results that are obtained with our proposal are
satisfactory and show that this approximate approach can attain results as
good as the exact Taylor procedure with less implementation and computational
effort.

\end{abstract}

\begin{keyword}
ODE integrators \sep  Taylor methods \sep Fa\`a di Bruno's formula
\end{keyword}

\end{frontmatter}

\section{Introduction}
In this paper we develop a numerical scheme consisting in an
approximate formulation of  Taylor methods for ODEs, akin to the
scheme proposed by Zor\'{\i}o et. al. in  
\cite{ZorioBaezaMulet17} for hyperbolic conservation laws.

Taylor methods are based on differentiating
the ODEs to obtain the high-order derivatives of the solution to be
used in (truncated) Taylor series expansions for the next time step of
the solution. The terms in the differentiation  of a composition of
functions may grow exponentially as the number of terms  in the 
Taylor series does, thus leading to expressions that are
difficult to obtain and costly to evaluate.
The scheme proposed in this paper is much easier to implement due to
its main strong point, which is that its implementation only depends
on the functions in the ODEs and not on their high-order
derivatives. Moreover, this scheme can be formulated to 
an arbitrary high order of accuracy with relative easiness and the
computational cost increases quadratically as the order increases.
Compared to relatively low-order Runge-Kutta methods, the number of
function evaluations to achieve  a given order is higher, however with the present procedure it is much easier to produce 
arbitrary high-order schemes, a feature that may be crucial for
certain applications that require very 
high orders of accuracy and possibly extended precision arithmetic
(see, e.g., \cite{Jorba05,Paresetal17}).

This paper is organized as follows: in Section \ref{ens} we review
the Taylor methods for solving ODEs; in Section \ref{cka}  we present the approximate
Taylor methods and analyze their accuracy and stability; in Section \ref{nex}
some numerical experiments are presented in order to show the
performance of these schemes; finally, in Section \ref{cnc} some
conclusions are drawn.

\section{Taylor methods}\label{ens}

Without loss of generality, we consider initial value problems for
systems of $m$ autonomous  ODEs for
the unknown $u=u(t)$:
\begin{equation}\label{eq:edo}
    u'=f(u), u(0)=u_0,
\end{equation}
 for nonautonomous systems can be cast as autonomous systems by
considering $t$ as a new unknown and by correspondingly inserting a
new equation $t'=1$ and initial condition $t(0)=0$.

We aim to obtain approximations   $v_{h,n}\approx u(t_n) $ of the
solution $u$ of \eqref{eq:edo} on  $t_n=nh $, 
$n=0,\dots$,  with spacing $h>0$.
We use
the following notation for time derivatives of $u$ (where we omit for
simplicity the dependency on $h$):
   \begin{align*}
     u_{n}^{(l)}&=\frac{d^{l} u(t_n)}{d t^l}.
   \end{align*}

Taylor methods aim to obtain an $R$-th order accurate numerical scheme, i.e., a scheme
with a local truncation error 
of order $R+1$, using the Taylor expansion  of the solution $u$
from time $t_n$ to the next time $t_{n+1}$:
$$u_{n+1}^{(0)}=u(t_{n+1})=\sum_{l=0}^R\frac{h^l}{l!}u_{n}^{(l)}+\bigO(h^{R+1}).$$

Approximations  $v_{h,n}^{(l)}\approx u_{n}^{(l)}$, $0\leq l \leq R$, are substituted in 
the Taylor series,  with $v_{h,n}^{(0)}=v_{h,n}$, $v_{h,n}^{(1)} = f(v_{h,n}^{(0)})$ and
$v_{h,n}^{(l)}$,  $2\leq l\leq R$, computed by successive differentiation of $f(u)$ with respect to $t$. 
Truncation of the resulting series gives the $R-th$ order Taylor method
$$v_{h,n+1}=v_{h,n+1}^{(0)}=\sum_{l=0}^R\frac{h^l}{l!}v_{h,n}^{(l)}.$$

As an example, the Taylor method for \eqref{eq:edo} in the scalar case ($m=1$) for third order accuracy 
(see, for instance,
\cite{HairerNorsettWanner93} for more details) is computed as follows: 
the fact that $u$ solves \eqref{eq:edo} implies that
\begin{equation}\label{eq:2001}
 \begin{aligned}
   u''&=f(u)'=f'(u)u'\\
   u'''&=f(u)''=f''(u)(u')^2+f'(u)u'',
 \end{aligned}
\end{equation} 
which yields approximations
 \begin{equation}\label{eq:40}
  \begin{aligned}
u'_n\approx v_{h,n}^{(1)}&=f(v_{h,n}^{(0)})\\
u''_n\approx v_{h,n}^{(2)}&=f'(v_{h,n}^{(0)})v_{h,n}^{(1)}\\
u'''_n\approx v_{h,n}^{(3)}&=f''(v_{h,n}^{(0)})(v_{h,n}^{(1)})^2+f'(v_{h,n}^{(0)})v_{h,n}^{(2)}.
\end{aligned}
\end{equation}
that result in the third order Taylor method
\begin{equation}\label{eq:taylor3e}
\begin{aligned}
v_{h,n+1}&=v_{h,n}+hv_{h,n}^{(1)}+\frac{h^2}{2}v_{h,n}^{(2)}+\frac{h^3}{6}v_{h,n}^{(3)}.
\end{aligned}
\end{equation}

 The following result
\cite{faadibruno1857}, which is a
generalization of the chain rule, describes a procedure to compute
high-order derivatives of compositions of functions. 

\begin{theorem}[Faà di Bruno's formula]\label{th:faadibruno}
  Let $f:\mathbb{R}^m\rightarrow\mathbb{R},$
  $u:\mathbb{R}\rightarrow\mathbb{R}^m$ $r$ times continuously
  differentiable. Then
\begin{equation}\label{eq:faadibruno}
\frac{d^r f(u(t))}{dt^r}=\sum_{s\in\mathcal{P}_r}\left(\begin{array}{c}
      r \\
      s
      \end{array}\right)f^{(|s|)}(u(t))D^su(t),
  \end{equation}    
  where $\mathcal{P}_{r}=\{ s\in\mathbb N^{r} /
\sum_{j=1}^{r} j s_j=r \}$, $|s|=\sum_{j=1}^{r} s_j$,
$\displaystyle\left(\begin{array}{c}
      r \\
      s
      \end{array}\right)=\frac{r!}{s_1!\cdots s_r!}$, 
    $D^s u(t)$ is an $m\times |s|$ matrix whose
    ($\sum\limits_{l<j}s_l+i$)-th column is given by
\begin{equation}\label{eq:ds}
  \displaystyle (D^s u(t))_{\sum\limits_{l<j}s_l+i}=\frac{1}{j!}\frac{\partial^{j}
      u(x)}{\partial t^j},\\
    \quad i=1,\dots,s_j,\quad j=1,\dots,r,
  \end{equation}
  and the action of the $k$-th derivative tensor of $f$ on a
    $m\times k$ matrix $A$ is  given by 
  \begin{equation}\label{eq:77}
      f^{(k)}(u)A=\sum_{i_1,\dots,i_k=1}^{m}\frac{\partial^k f}{\partial
    u_{i_1}\dots\partial u_{i_k}}(u) A_{i_1,1}\dots A_{i_{k},k}.
\end{equation}
\end{theorem}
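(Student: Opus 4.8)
The plan is to prove \eqref{eq:faadibruno} by induction on $r$, differentiating the formula for order $r$ to obtain that for order $r+1$ and then reconciling the resulting coefficients with the claimed multinomial weights. Throughout I use that, since $f$ is $r$ times continuously differentiable, the derivative tensors $f^{(k)}$ are symmetric by Schwarz's theorem, so the action \eqref{eq:77} is invariant under permutations of the columns of the matrix $A$; consequently the precise column ordering prescribed in \eqref{eq:ds} is immaterial and $D^s u$ may be regarded as the multiset of columns containing, for each $j$, exactly $s_j$ copies of $\frac{1}{j!}\frac{\partial^j u}{\partial t^j}$. The base case $r=1$ is the chain rule: $\mathcal{P}_1=\{(1)\}$, $|s|=1$, $\binom{1}{(1)}=1$, and $f^{(1)}(u)\,D^{(1)}u=f'(u)u'$, which is exactly \eqref{eq:faadibruno} for $r=1$.

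For the inductive step I would apply the product rule to each summand $\binom{r}{s}f^{(|s|)}(u)D^s u$. Writing $e_j$ for the $j$-th coordinate vector, differentiation produces two effects. Differentiating the tensor factor gives, by the chain rule, $\tfrac{d}{dt}f^{(|s|)}(u)=f^{(|s|+1)}(u)u'$; this appends a new block of size one, sending $s\mapsto s+e_1$ (so $|s|\mapsto|s|+1$) with combinatorial factor $1$. Differentiating the matrix factor $D^s u$ amounts, by multilinearity of \eqref{eq:77}, to differentiating one column at a time and summing; a column $\frac{1}{p!}\frac{\partial^p u}{\partial t^p}$ becomes $\frac{1}{p!}\frac{\partial^{p+1}u}{\partial t^{p+1}}=(p+1)\cdot\frac{1}{(p+1)!}\frac{\partial^{p+1}u}{\partial t^{p+1}}$, i.e. a block of size $p$ turns into one of size $p+1$ carrying an extra factor $p+1$. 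Since there are $s_p$ such columns, this contributes the move $s\mapsto s-e_p+e_{p+1}$ with factor $s_p(p+1)$, for each $p$ with $s_p>0$. One checks directly that both $s+e_1$ and $s-e_p+e_{p+1}$ lie in $\mathcal{P}_{r+1}$, so the differentiated expression is again a sum over $\mathcal{P}_{r+1}$.

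The crux is then the coefficient count: fixing $s'\in\mathcal{P}_{r+1}$, I would collect every predecessor in $\mathcal{P}_r$ that maps to it. The move $s\mapsto s+e_1$ has the single predecessor $s=s'-e_1$ (admissible when $s'_1\geq1$), while the move $s\mapsto s-e_p+e_{p+1}$ has, for each $q\geq2$ with $s'_q\geq1$, the predecessor $s=s'+e_{q-1}-e_q$ (taking $p=q-1$). Expanding the multinomial coefficients and simplifying the factorials, the first contributes $\frac{s'_1}{r+1}\binom{r+1}{s'}$ and the second contributes $\frac{q\,s'_q}{r+1}\binom{r+1}{s'}$ for each $q$. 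Summing over all predecessors yields the total coefficient $\frac{1}{r+1}\big(\sum_{q=1}^{r+1}q\,s'_q\big)\binom{r+1}{s'}$, and the defining constraint $\sum_{q}q\,s'_q=r+1$ of $\mathcal{P}_{r+1}$ collapses this to exactly $\binom{r+1}{s'}$, completing the induction. I expect the main obstacle to be precisely this bookkeeping — keeping the two families of moves, their multiplicities, and the $\tfrac{1}{j!}$ normalisations straight — rather than any analytic subtlety, since the final cancellation hinges entirely on the weight $\sum_q q\,s'_q$ equalling $r+1$. An alternative, more combinatorial route would be to first establish, again by induction, the set-partition form $\frac{d^r}{dt^r}f(u(t))=\sum_{\pi}f^{(|\pi|)}(u)\prod_{B\in\pi}u^{(|B|)}$ summed over set partitions $\pi$ of $\{1,\dots,r\}$, and then group partitions by type, using that the number of partitions with $s_j$ blocks of size $j$ is $\frac{r!}{\prod_j(j!)^{s_j}s_j!}$; the factors $(j!)^{s_j}$ cancel against the $\tfrac{1}{j!}$ in \eqref{eq:ds}, leaving the weight $\binom{r}{s}$.
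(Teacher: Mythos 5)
Your proof is correct, but note that the paper does not prove this theorem at all: it quotes Fa\`a di Bruno's formula as a classical result, with a pointer to \cite{faadibruno1857}, and only \emph{uses} it (in the proof of Theorem \ref{th:1}). So there is no paper proof to match; what you supply is a self-contained argument, and it holds up. The base case is the chain rule; in the inductive step your two ``moves'' are exactly the result of the product rule applied to $\binom{r}{s}f^{(|s|)}(u)\,D^su$: differentiating the tensor factor appends a column $u'=\frac{1}{1!}u^{(1)}$, giving $s\mapsto s+e_1$ with weight $1$, and differentiating one of the $s_p$ identical columns $\frac{1}{p!}u^{(p)}$ gives $s\mapsto s-e_p+e_{p+1}$ with weight $s_p(p+1)$, the factor $p+1$ coming from the renormalisation $\frac{1}{p!}u^{(p+1)}=(p+1)\frac{1}{(p+1)!}u^{(p+1)}$. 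I checked your factorial ratios: the predecessor $s'-e_1$ contributes $\frac{s_1'}{r+1}\binom{r+1}{s'}$, the predecessor $s'+e_{q-1}-e_q$ contributes $\binom{r}{s'+e_{q-1}-e_q}(s'_{q-1}+1)\,q=\frac{q\,s_q'}{r+1}\binom{r+1}{s'}$, and the constraint $\sum_q q\,s_q'=r+1$ collapses the total to $\binom{r+1}{s'}$ as claimed. The appeal to Schwarz symmetry of $f^{(k)}$ is genuinely needed (otherwise the appended $u'$ column sits in the wrong slot) and is legitimately available under the $C^r$ hypothesis. Two cosmetic points you should make explicit: elements of $\mathcal{P}_r$ are vectors of length $r$, so in the step you must embed them into $\mathbb{N}^{r+1}$ by appending a zero (this matters for the move with $p=r$, where $s=e_r$ produces $e_{r+1}$); and the induction from $r$ to $r+1$ uses $f,u\in C^{r+1}$, so the statement being proved by induction should be phrased for each fixed order with the matching smoothness. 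Your alternative set-partition route is also sound, and the cancellation you point out --- the $(j!)^{s_j}$ in the partition count $\frac{r!}{\prod_j (j!)^{s_j}s_j!}$ against the $\frac{1}{j!}$ normalisation built into \eqref{eq:ds} --- is precisely why the paper's coefficient is the bare multinomial $\binom{r}{s}$ rather than the more familiar form of Fa\`a di Bruno's coefficients.
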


\subsection{Computational cost}
\label{ss:computational_cost}
Fa\`a di Bruno's formula can be used to obtain efficient
implementations of Taylor methods if the derivatives $f^{(k)}(u)$ are
available. These are typically obtained by  symbolic calculations.

Specifically,   the implementation
of a Taylor method of order $R$ requires  $f_i^{(r)}(u)$, $i=1,\dots,m$, for
$r=1,\dots,R-1$. It is not easy to give a simple expression for the
number of terms in \eqref{eq:faadibruno}, but since $s_r=(r, 0,\dots,0)\in
\mathcal{P}_r$,  the terms
\begin{equation}\label{eq:800}
f_{i}^{(r)}(v_{h,n})\overbrace{\begin{bmatrix}v_{h,n}^{(1)} &\dots &
  v_{h,n}^{(1)}
\end{bmatrix}}^{r}, \quad i=1,\dots,m
\end{equation}
corresponding to
\begin{equation*}
f_{i}^{(|s_r|)}(u(t))D^{s_r}u(t) = f_{i}^{(r)}(u(t))[\overbrace{u'(t) \dots
  u'(t)}^{r} ]
\end{equation*}
have to be computed.
 Assuming that no 
derivative of $f_{i}$ vanishes and taking into account that
the number of distinct $r$-th order derivatives is $\binom{m+r-1}{r}$,
an efficient computation of \eqref{eq:77}, \eqref{eq:800} 
would require about  $(r-1)m^r$ products to account for
$A_{i_1,1}\dots A_{i_{r},r}$, $m^r$ sums and $\binom{m+r-1}{r}$
computations of $r$-th order derivatives, $r\geq 0$, and corresponding products.
Assuming a uniform  computational cost for these $r$-th order
derivatives of $c_r$ operations, a fairly conservative lower bound
for the
computational cost for obtaining $v_{h,n}^{(r)}$, $r=2,\dots, R$ is
\begin{equation*}
  \begin{aligned}
    &m\sum_{r=1}^{R-1}\left((r-1)m^r+m^r+
  \binom{m+r-1}{r}(c_r+1)\right)\\  
    &=m\sum_{r=1}^{R-1}\left(rm^r+
  \binom{m+r-1}{r}(c_r+1)\right).
\end{aligned}
\end{equation*}
The computation of the first order term $f_i$ is $c_0$ and the
computation of the Taylor polynomial of order $R$ for each component
is $2R$, so the overall cost can be bounded from below by
\begin{equation}\label{eq:801}
  \begin{aligned}
    C_{T}(R, m)&=m\sum_{r=0}^{R-1}\left(rm^r+
      \binom{m+r-1}{r}(c_r+1)\right).
  \end{aligned}
\end{equation}

For $m=1$, 
\begin{equation}\label{eq:802}
  \begin{aligned}
    C_{T}(R, 1)&\approx R^2 +\sum_{r=0}^{R-1}c_r.
  \end{aligned}
\end{equation}

For $m>1$
\begin{equation}\label{eq:803}
  \begin{aligned}
    C_{T}(R, m)&\geq \widetilde{C}_{T}(R, m)=(R-1)m^R+m\sum_{r=0}^{R-1}
  \binom{m+r-1}{r}(c_r+1).
\end{aligned}
\end{equation}

\section{Approximate Taylor methods}\label{cka}
We now present an alternative to the Taylor methods described in Section \ref{ens}, which is much less expensive for large
$m, R$
and agnostic about the equation, in the sense that its only
requirement is the knowledge of the function $f$ defining the ODE.
As proven below in Theorem \ref{th:1}, this technique is based on the
fact that the replacement of  exact expressions 
of high-order derivatives of the function $f$
  that appear in  Taylor methods, e.g., those in \eqref{eq:2001} for
  the third-order Taylor method, 
 by 
 approximations  
$v_{h,n}^{(l)}\approx u_{n}^{(l)}$, $l=1,\dots,R$, such that 
\begin{align}\label{eq:200}
  v_{h,n}^{(1)} = f(v_{h,n}^{(0)}),\quad 
v_{h,n}^{(l)}=u_{n}^{(l)}+\bigO(h^{R+1-l}), l=2,\dots,R,
\end{align}
when $v_{h,n}^{(0)}=u(t_n)$, yields an $R-th$ order accurate scheme. In this work 
we compute the required approximations 
 by simply using finite differences of sufficient order.
Notice that the \textit{exact} third order Taylor method, defined
through \eqref{eq:40} and \eqref{eq:taylor3e}, 
satisfies \eqref{eq:200} with no error.

We introduce next some notation which will help in the description of
the approximate method along this section. 

For a function $u\colon \mathbb R\to \mathbb R^{m}$, we denote the 
function on  the grid defined by a base
  point $a$ and grid space $h$ by
  \begin{equation*}
    G_{a,h}(u)\colon\mathbb{Z} \to \mathbb {R}^{m},\quad
    G_{a,h}(u)_i=u(a+ih).
  \end{equation*}
For naturals $p,q$, we denote by  $ \Delta^{p,q}_{h}$
the centered finite differences operator that  approximates $p$-th order
  derivatives to order $2q$ on grids with spacing $h$, which, for any $u$
  sufficiently differentiable,  satisfies (see \cite[Proposition
  1]{ZorioBaezaMulet17} for the  details):
  \begin{align}\label{eq:3}
    \Delta^{p,q}_{h}
    G_{a,h}(u)=u^{(p)}(a)+\bigO(h^{2q}).
  \end{align}

 We aim to define approximations $v^{(k)}_{h,n} \approx
u^{(k)}_{n}$, $k=0,\dots,R$, recursively. Given $v_{h,n}$, we start the recursion with
\begin{equation}\label{eq:151}
  \begin{aligned}
    v^{(0)}_{h,n}&=v_{h,n},\\
    v^{(1)}_{h,n}&=f(v_{h,n}).\\
  \end{aligned}
\end{equation}   
Associated to fixed $h, n$, once obtained $v^{(l)}_{h,n}$,
$l=0,\dots,k$ in the recursive process we define
\begin{equation}\label{eq:15}
  \begin{aligned}
    v^{(k+1)}_{h,n} &=
    \Delta_{h}^{k,\left\lceil \frac{R-k}{2}\right\rceil}\Big(G_{0,h}\big(f(T_{h,n}^{k})\big)\Big),\\
  \end{aligned}
\end{equation}   
where $T_{h,n}^{k}$ is the $k$-th degree approximate Taylor polynomial given by
\begin{align}\label{eq:300}
  T_{h,n}^{k}(\rho)=\sum_{l=0}^{k}\frac{v^{(l)}_{h,n} }{l!} \rho^l.
\end{align}

With this notation, the proposed scheme is:
\begin{equation}\label{eq:60}
v_{h,n+1}=v_{h,n}+\sum_{l=1}^R\frac{h^l}{l!}v_{h,n}^{(l)}.
\end{equation}

The following result is a simplified adaptation of the corresponding
result in \cite{ZorioBaezaMulet17}.

\begin{theorem}\label{th:1}
  The scheme defined by \eqref{eq:15} and \eqref{eq:60} is $R$-th
  order accurate. 
\end{theorem}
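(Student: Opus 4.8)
The plan is to prove the theorem in two stages: first establish that the recursively defined quantities $v^{(l)}_{h,n}$ satisfy the consistency conditions \eqref{eq:200}, and then read off $R$-th order accuracy from those conditions. The second stage is short. Assuming $v^{(0)}_{h,n}=u(t_n)$, I would expand the exact solution as $u(t_{n+1})=\sum_{l=0}^R\frac{h^l}{l!}u_{n}^{(l)}+\bigO(h^{R+1})$ and subtract the scheme \eqref{eq:60}. The $l=0$ terms cancel; the $l=1$ terms cancel because $v^{(1)}_{h,n}=f(u(t_n))=u^{(1)}_n$; and for $2\le l\le R$ the condition $v^{(l)}_{h,n}=u_{n}^{(l)}+\bigO(h^{R+1-l})$ turns each remaining contribution $\frac{h^l}{l!}(u_{n}^{(l)}-v^{(l)}_{h,n})$ into $\bigO(h^{R+1})$. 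Hence the local truncation error is $\bigO(h^{R+1})$, which is exactly $R$-th order accuracy in the sense defined in Section \ref{ens}.

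The substance is the first stage, which I would carry out by induction on $k$, proving that when $v^{(0)}_{h,n}=u(t_n)$ one has $v^{(k)}_{h,n}=u_{n}^{(k)}+\bigO(h^{R+1-k})$ for $1\le k\le R$. The base case $k=1$ holds with no error, since $v^{(1)}_{h,n}=f(u(t_n))=u'(t_n)=u^{(1)}_n$. For the inductive step, fix $k$ and introduce the \emph{exact} degree-$k$ Taylor polynomial $P^k(\rho)=\sum_{l=0}^k\frac{u_{n}^{(l)}}{l!}\rho^l$ of $u(t_n+\rho)$. The key is to compare the approximate Taylor polynomial $T^k_{h,n}$ of \eqref{eq:300} with $P^k$ on the finite-difference stencil. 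There the argument is $\rho=ih$ with $|i|$ bounded, so $\rho^l=\bigO(h^l)$; combining this with the inductive estimates $v^{(l)}_{h,n}-u_{n}^{(l)}=\bigO(h^{R+1-l})$ gives, at every stencil node, $T^k_{h,n}(ih)-P^k(ih)=\sum_{l=1}^k\bigO(h^{R+1-l})\,\bigO(h^l)=\bigO(h^{R+1})$.

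With this comparison in hand, I would split $v^{(k+1)}_{h,n}=\Delta^{k,q}_{h}\big(G_{0,h}(f(T^k_{h,n}))\big)$, where $q=\left\lceil\frac{R-k}{2}\right\rceil$, into two pieces. Writing $f(T^k_{h,n}(ih))=f(P^k(ih))+\bigO(h^{R+1})$ by smoothness of $f$, and using that $\Delta^{k,q}_{h}$ has a bounded stencil with coefficients of size $\bigO(h^{-k})$, the perturbation contributes $\bigO(h^{-k})\,\bigO(h^{R+1})=\bigO(h^{R+1-k})$, well within the target $\bigO(h^{R-k})$. For the main piece, since $\rho\mapsto f(P^k(\rho))$ is smooth, property \eqref{eq:3} yields $\Delta^{k,q}_{h}\big(G_{0,h}(f(P^k))\big)=\big[f(P^k)\big]^{(k)}(0)+\bigO(h^{2q})$, and $2q\ge R-k$ makes the truncation term $\bigO(h^{R-k})$. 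Finally I invoke Fa\`a di Bruno's formula (Theorem \ref{th:faadibruno}): the $k$-th derivative of a composition $f\circ g$ at a point depends only on $g$ and its derivatives up to order $k$; since $P^k$ matches $u(t_n+\cdot)$ in all derivatives of order $\le k$ at $\rho=0$, and $u'=f(u)$, we obtain $\big[f(P^k)\big]^{(k)}(0)=\big[f(u(t_n+\cdot))\big]^{(k)}(0)=u^{(k+1)}_n$. Assembling the pieces gives $v^{(k+1)}_{h,n}=u^{(k+1)}_n+\bigO(h^{R-k})$, closing the induction and hence \eqref{eq:200}.

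I expect the main obstacle to be the bookkeeping in the inductive step, where one must simultaneously track the finite-difference truncation error $\bigO(h^{2q})$, the amplification factor $\bigO(h^{-k})$ of the difference operator, and the interplay $\rho^l=(ih)^l$ that rescues the low-order inaccuracies $\bigO(h^{R+1-l})$ of the approximate derivatives. The conceptual crux---and the reason the scheme succeeds using only finite differences of $f$ rather than exact derivatives---is the Fa\`a di Bruno observation that a $k$-th order composition derivative is insensitive to errors in derivatives of order higher than $k$, so truncating $T^k_{h,n}$ at degree $k$ costs nothing at this order of accuracy.
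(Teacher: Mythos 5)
Your proof is correct, and while it shares the paper's overall skeleton (induction on $k$ with target $v^{(k)}_{h,n}=u^{(k)}_n+\bigO(h^{R-k+1})$, then the standard local-truncation-error wrap-up), the inductive step takes a genuinely different route. The paper applies the finite-difference consistency property \eqref{eq:3} directly to $f(T^k_{h,n})$, obtaining $v^{(k+1)}_{h,n}=\big(f\circ T^k_{h,n}\big)^{(k)}(0)+\bigO(h^{2q})$ as in \eqref{eq:1}, and then compares $\big(f\circ T^k_{h,n}\big)^{(k)}(0)$ with $\big(f\circ u\big)^{(k)}(t_n)$ by expanding both via Fa\`a di Bruno and feeding the induction hypothesis into the entries of the matrices $D^s w(0)$ and $D^s u(t_n)$, as in \eqref{eq:78}--\eqref{eq:82}. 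You instead interpose the exact Taylor polynomial $P^k$ and shift the comparison from derivatives of compositions to grid values: Lipschitz continuity of $f$ plus the $\bigO(h^{-k})$ scaling of the difference coefficients turns the $\bigO(h^{R+1})$ agreement of $T^k_{h,n}$ and $P^k$ on the stencil into an $\bigO(h^{R+1-k})$ perturbation, after which \eqref{eq:3} is applied to the fixed ($h$-independent) function $f\circ P^k$ and Fa\`a di Bruno is needed only for the soft observation that a $k$-th order composition derivative at a point is determined by the inner function's derivatives of order at most $k$ there. Each approach has a distinct cost: yours requires structural information about $\Delta^{k,q}_h$ beyond the bare consistency statement \eqref{eq:3} (finite stencil, coefficients of size $\bigO(h^{-k})$ --- true by \cite[Proposition 1]{ZorioBaezaMulet17}, but an extra ingredient), whereas the paper's step \eqref{eq:82} treats the action of the derivative tensors as if it were linear in the matrix argument $D^s$, which strictly speaking needs a short telescoping/multilinearity argument that your route avoids entirely. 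Your version also cleanly separates the two error sources (polynomial truncation vs.\ finite-difference truncation), which makes the budget $2q\geq R-k$ transparent.
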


\begin{proof}
  For the accuracy analysis of the local truncation error, we take
  \begin{equation}\label{eq:45}
v^{(0)}_{h,n}=u(t_n),
\end{equation}
and use induction on $k=1,\dots,R$ to prove that
  \begin{align}
    \label{eq:47}
v^{(k)}_{h,n}&= u^{(k)}_{n}+\bigO(h^{R-k+1}).
\end{align}
 The result  in
\eqref{eq:47} for $k=1$ immediately follows, with no error, from the
fact that $u$ solves \eqref{eq:edo}.

Assume now the result to hold for $k$ and aim to prove it for
$k+1\leq R$.
From
\eqref{eq:15} and \eqref{eq:3}, with
$q=\left\lceil   \frac{R-k}{2}\right\rceil$
and the notation $w=T_{h,n}^{k}$: 
  \begin{align}\label{eq:1}
    v^{(k+1)}_{h,n} &=
    f(w)^{(k)}(0)+\bigO(h^{2q}).
  \end{align}
Fa\`a di Bruno's formula \eqref{eq:faadibruno} applied to $f(w)$ and
$f(u)$ yields: 
\begin{equation}     \label{eq:5}
   \begin{aligned}
    &f(w)^{(k)}(0)=\sum_{s\in \mathcal{P}_{k}}
    \left(
      \begin{array}{c}
        k\\
        s
      \end{array}
    \right)
    f^{(|s|)}(w(0))       \big(D^{s} w(0)\big),\\
    &D^{s} w (0)=
    \begin{bmatrix}
      \overbrace{
        \begin{array}{ccc}
          \frac{w^{(1)}(0)}{1!}
          &\dots&
          \frac{w^{(1)}(0)}{1!}
        \end{array}
      }^{s_1}
      &\dots&
      \overbrace{
        \begin{array}{ccc}
          \frac{w^{(k)}(0)}{k!}
          &\dots&
          \frac{w^{(k)}(0)}{k!}
        \end{array}
      }^{s_k}
    \end{bmatrix},      \\
   &D^{s} w (0)= 
    \begin{bmatrix}
      \overbrace{
        \begin{array}{ccc}
          \frac{w_{h,n}^{(1)}}{1!}
          &\dots&
          \frac{v_{h,n}^{(1)}}{1!}
        \end{array}
      }^{s_1}
      &\dots&
      \overbrace{
        \begin{array}{ccc}
          \frac{v_{h,n}^{(k)}}{k!}
          &\dots&
          \frac{v_{h,n}^{(k)}}{k!}
        \end{array}
      }^{s_k}
    \end{bmatrix},     
  \end{aligned}       
\end{equation}
  
  \begin{equation}\label{eq:4}
  \begin{aligned}
    &f(u)^{(k)}(t_n)=
    \sum_{s\in \mathcal{P}_{k}}
    \left(
      \begin{array}{c}
        k\\
        s
      \end{array}
    \right)
    f^{(|s|)}(u(t_n))       \big(D^{s} u(t_n)\big),\\
    &D^{s} u (t_n)=
    \begin{bmatrix}
      \overbrace{
        \begin{array}{ccc}
          \frac{u^{(1)}(t_n)}{1!}
          &\dots&
          \frac{u^{(1)}(t_n)}{1!}
        \end{array}
      }^{s_1}
      &\dots&
      \overbrace{
        \begin{array}{ccc}
          \frac{u^{(k)}(t_n)}{k!}
          &\dots&
          \frac{u^{(k)}(t_n)}{k!}
        \end{array}
      }^{s_k}
    \end{bmatrix},      \\
    &D^{s} u (t_n)=
    \begin{bmatrix}
      \overbrace{
        \begin{array}{ccc}
          \frac{u_{n}^{(1)}}{1!}
          &\dots&
          \frac{u_{n}^{(1)}}{1!}
        \end{array}
      }^{s_1}
      &\dots&
      \overbrace{
        \begin{array}{ccc}
          \frac{u_{n}^{(k)}}{k!}
          &\dots&
          \frac{u_{n}^{(k)}}{k!}
        \end{array}
      }^{s_k}
    \end{bmatrix}.     
  \end{aligned}
\end{equation}
We have $ w(0)=v_{h,n}^{(0)}=u(t_n)$ and, by induction, i.e.,
\eqref{eq:47}, we have:
  \begin{equation}\label{eq:90}
    v_{h,n}^{(l)}=u_{n}^{(l)}+\bigO(h^{R-l+1}),\quad l=1,\dots,k.
\end{equation}
For any $s\in\mathcal{P}_{k}$, $D^{s}w(0)$ is a
  $m\times |s|$ matrix, and  for any
  $\mu\in\{1,\dots,m\}$ and 
  $\nu\in\{1,\dots,|s|\}$, we have from \eqref{eq:ds}, \eqref{eq:5},
  \eqref{eq:4} and \eqref{eq:90} that 
  \begin{equation}\label{eq:78}
(D^s  w(0)-D^s u(t_n))_{\mu, \nu} = \frac{(v_{h,n}^{(l)}-u_{n}^{(l)})_{\mu}}{l!} =  
\bigO(h^{R-l+1}), \quad l=l(s, \nu)\leq k.
\end{equation}

We deduce from \eqref{eq:78}, \eqref{eq:77},
\eqref{eq:5}, \eqref{eq:4} that
  \begin{align}    \label{eq:82}
    f(w)^{(k)}(0)-f(u)^{(k)}(t_n)&=
    \sum_{s\in \mathcal{P}_{k}}
    \left(
      \begin{array}{c}
        k\\
        s
      \end{array}
    \right)
    f^{(|s|)}(u(t_n))       \big(D^{s} w(0)-D^{s} u(
    t_n)\big)= \bigO(h^{R-k+1}).
  \end{align}    
  
  Now, since $2q\leq R-k$, \eqref{eq:1}, \eqref{eq:4} and \eqref{eq:82} yield:
  \begin{equation*}
    v^{(k+1)}_{h,n}
 -u^{(k+1)}_{n}=\bigO(h^{R-k+1})+\bigO(h^{2q}) = \bigO(h^{R-k}),
\end{equation*}
which concludes the proof of \eqref{eq:47}.

The local truncation error is given by
  \begin{align*}
    E_L(t_n, h)&=u_{n+1}^{(0)}-    
    \sum_{l=0}^R\frac{ h^l}{l!}v_{h,n}^{(l)},
  \end{align*}
  where $v_{h,n}^{(l)}$ are computed from
  $v_{h,n}^{(0)}=u(t_n)$. Taylor expansion of the first
  term and the estimates in \eqref{eq:47} yield that  
  \begin{align*}
    E_L(t_n, h)&=\sum_{l=1}^{R}\frac{h^l }{l!}(u_{n}^{(l)}
    -v_{h,n}^{(l)})+\bigO(h^{R+1})\\
    &=\sum_{l=1}^{R}\frac{h^l }{l!}\bigO(h^{R-l+1})+\bigO(h^{R+1}) =\bigO(h^{R+1}).
  \end{align*}
\end{proof}

\subsection{Computational cost}\label{sec:cc}
  From \eqref{eq:15} and the centered finite difference formulas
  obtained in \cite[Proposition 1]{ZorioBaezaMulet17} we may deduce
  that  the
  computation   of  $v^{(l)}_{h,n}$, for
  $l=1,\dots,k$ requires about $R^2$  evaluations of $f$,
  and  $4mR^2$ floating point
  operations. The   $2mR$  floating point
  operations for the evaluation of the polynomial in
  \eqref{eq:60} are neglected in this computation, for having a lower
  order in $R$ that the previous terms. With the notation of section
  \ref{ss:computational_cost}, the approximate computational cost is
  \begin{equation}\label{eq:804}
    C_{AT}=mR^2 (c_0 + 4).
  \end{equation}
  A comparison of this expression with \eqref{eq:802} and
  \eqref{eq:803} leads to the   conclusion that our proposal should have a
  much smaller computational cost for large $m$ for tightly coupled
  systems, for which very few or none of the derivatives of the ODE
  vanish. On the contrary, for $m=1$, the advantage is not clear and
  it will depend on the ratios $c_r / c_0$, $r=1,\dots,R-1$.
  
  We consider, for instance, the application of this for the system of
  rational equations
  \begin{equation}\label{eq:806}
  \begin{aligned}
    u'_{i}&=f_i(u_1,\dots,u_{m})=\frac{p_i(u_1,\dots,u_{m})}{q_i(u_1,\dots,u_{m})},\\
    p_i(u_1,\dots,u_{m})&=\sum_{j}\alpha_{i,j}u_j, \quad
    q_i(u_1,\dots,u_{m})=\sum_{j}\beta_{i,j}u_j.
  \end{aligned}
\end{equation}  
It can be seen by using the general Leibniz rule that computing
  an $r$-th order partial derivative, for $r\geq 2$, from already
  computed lower order 
  derivatives needs $2r$ operations ($r-1$ sums, $r$ products by the
  coefficients $\beta$ and one extra product by the inverse of the
  denominators), so that one can take $c_r=2r$ in \eqref{eq:803}, for
  $r\geq 2$. First-order derivatives can be computed with $c_1=3$ operations
  and function values $f_i(u_1,\dots,u_m)$ take about $4m$ operations,
  so that $c_0=4m$. With this numbers, the quotient of \eqref{eq:803}
  and \eqref{eq:804} is
  \begin{multline}\label{eq:805}
Q(m, R)=\frac{\widetilde{C}_{T}(R, m)}{C_{AT}(R, m)}=\frac{(R-1)m^R+m\sum_{r=0}^{R-1}
  \binom{m+r-1}{r}(c_r+1)}{  mR^2 (c_0 + 4)  } \\
\geq 
\frac{(R-1)m^{R-1}+4m+4(m+1)+\sum_{r=2}^{R-1}
  \binom{m+r-1}{r}(2r+1)\big)}{  4(m+1)R^2  }\\
\geq \frac{(R-1)m^{R-1}+8m+\sum_{r=2}^{R-1}
  \binom{m+r-1}{r}(2r+1)}{  4(m+1)R^2  }.
\end{multline}
Since, for $R>3$, $\lim_{m\to\infty} Q(m, R)=\infty$ and, for $m>1$,
$\lim_{R\to\infty} Q(m, R)=\infty$, the computational savings for our
proposal are asymptotically guaranteed in a case like the present one,
in which the unknowns are strongly coupled.

\subsection{Low order approximate Taylor methods}
For notational simplicity, we drop the subindex $h,n$ from $v_{h,n}^{(l)}$.
Following the procedure  in \eqref{eq:15}--\eqref{eq:60}, we have
$$v^{(1)}=f(v).$$

We compute approximations of the second order time
derivative of $u$, $u^{(2)}$, by performing the following
operation:
\begin{equation}\label{eq:901}
  \begin{split}
    v^{(2)}=\frac{f(v+hv^{(1)})-f(v-hv^{(1)})}{2h}.
  \end{split}
\end{equation}
Therefore, the second order approximate Taylor method reads as:
\begin{align*}
v_{n+1}&=v+hv^{(1)}+\frac{h^2}{2}v^{(2)}\\
&=v+h\Big (f(v)+\frac{1}{4}f(v+hv^{(1)})-\frac{1}{4}f(v-hv^{(1)})\Big).
\end{align*}
Notice that this method can be regarded as a Runge-Kutta method with 3
stages and the following Butcher array:
$$
\begin{array}{r|rrr} 0 & 0 & 0 & 0\\
 -1 & -1 & 0 & 0\\
 1 & 1 & 0 & 0\\\hline
  & 1 & - \frac{1}{4} & \frac{1}{4} \end{array}.
$$
It is also worth pointing out that the application of this method to
the equation $u'=\lambda u$ results in the time-stepping
\begin{equation*}
  v_{n+1}=(1+h\lambda+\frac{1}{2}(h\lambda)^2)v_n,
\end{equation*}
and therefore the second order approximate Taylor method has the same absolute stability
region as its exact counterpart.

The approximation of $u^{(3)}$ is obtained in a similar
fashion:
\begin{equation}\label{eq:902}
  \begin{split}
    v^{(3)}&=\frac{f(v+hv^{(1)}+\frac{h^2}{2}v^{(2)})-2f(v)+f(v-hv^{(1)}+\frac{h^2}{2}v^{(2)})}{h^2}.
  \end{split}
\end{equation}

The next time step is then computed through the third order Taylor
expansion replacing the derivatives with their corresponding
approximations in \eqref{eq:901} and \eqref{eq:902}:
\begin{align*}
v_{n+1}&=v+hv^{(1)}+\frac{h^2}{2}v^{(2)}+\frac{h^3}{6}v^{(3)}\\
&=v+h\Big (f(v)+\frac{1}{4}\big(f(v+hv^{(1)})-f(v-hv^{(1)})\big)\\
&+\frac{1}{6}\big(f(v+hv^{(1)}+\frac{h^2}{2}v^{(2)})-2f(v)+f(v-hv^{(1)}+\frac{h^2}{2}v^{(2)})\big)\Big)\\
&=v+h\Big (\frac{2}{3}f(v)+\frac{1}{4} f(v+hv^{(1)})-\frac{1}{4}f(v-hv^{(1)})\\
&+\frac{1}{6}f(v+hv^{(1)}+\frac{h^2}{2}v^{(2)})+\frac{1}{6}f(v-hv^{(1)}+\frac{h^2}{2}v^{(2)})\Big).
\end{align*}
It can be readily seen that the  third-order approximate Taylor
method is a 5-stages Runge-Kutta method with Butcher array given by:
$$\begin{array}{r|rrrrr} 0 & 0 & 0 & 0 & 0 & 0\\
 -1 & -1 & 0 & 0 & 0 & 0\\
 1 & 1 & 0 & 0 & 0 & 0\\
 -1 & -1 & - \frac{1}{4} & \frac{1}{4} & 0 & 0\\
 1 & 1 & - \frac{1}{4} & \frac{1}{4} & 0 & 0\\ \hline
  & \frac{2}{3} & - \frac{1}{4} & \frac{1}{4} & \frac{1}{6} & \frac{1}{6} \end{array}
$$
We conjecture that the $R$-th order approximate Taylor method  can be
cast as a Runge-Kutta method with $R^2+\bigO(R)$ stages, which  is
larger than the upper bound on the number of stages to achieve $R$-th
order given in \cite[Theorem 324C,  page 188]{Butcher2008}: $\frac{3}{8}R^2+\bigO(R)$.

\section{Numerical experiments}\label{nex}

We analyze the exact and approximate Taylor methods with several numerical examples. We compare 
the order of the numerical error and the performance of both
algorithms. For these experiments, in  
the cases where the exact solution is unknown, we take  as reference solution the one computed 
with the same method with a mesh refined by a factor of 10 with respect to the finer 
discretization considered in the experiment. Regarding performance, the times indicated 
correspond to the average of multiple runs. The exact Taylor method was computed with the help of
MATLAB's symbolic toolbox. The \texttt{simplify} command was used on
each term of the Taylor expansion.

\subsection{Scalar equations}

We consider in this subsection three problems with different complexity to analyze the method. As a 
first example we consider the problem 

\begin{equation}\label{eq:ex1}
\left\{\begin{array}{l}
u'=\sin(u),\\
u(t_0) = u_0.
\end{array}\right.
\end{equation}
whose exact solution is given by $$u(t) = \cot ^{-1}\left(e^{t0-t} \cot \left(\frac{u_0}{2}\right)\right).$$
The results for $t_0=0, u_0 = \frac{\pi}{2}$, $T=1$ can be seen in Figure \ref{fig:sin1}. 

\begin{figure}
  \begin{center}
    \begin{tabular}{cc}
      \includegraphics[width=0.5\textwidth]{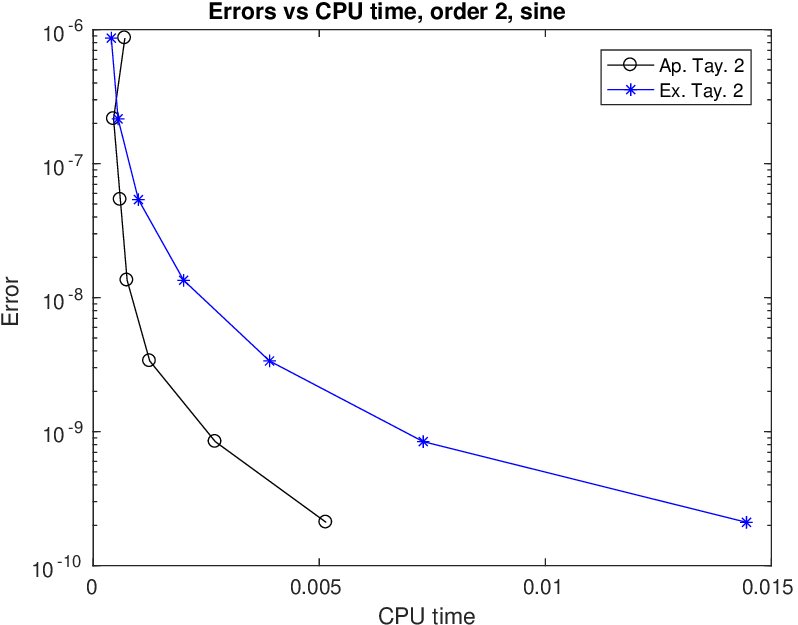}
      &\includegraphics[width=0.5\textwidth]{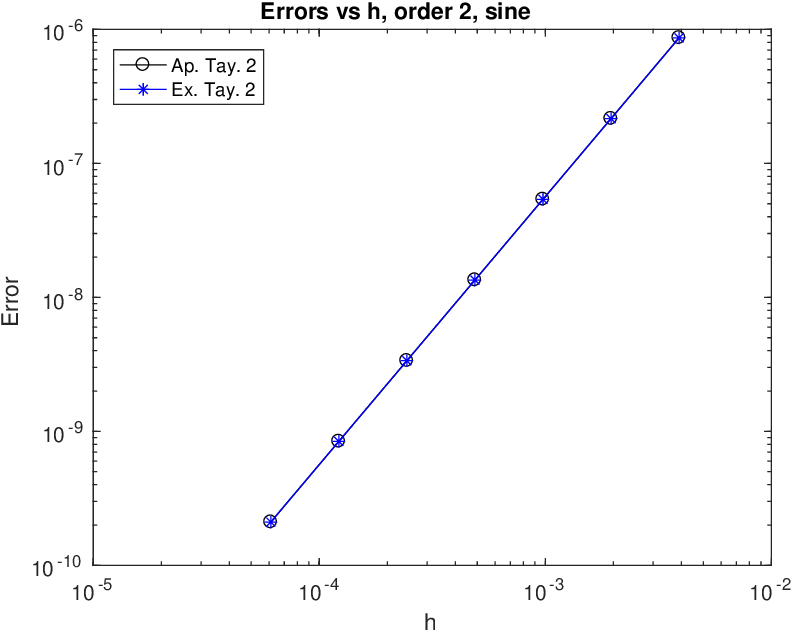}\\
      (a)&(b)\\
      \includegraphics[width=0.5\textwidth]{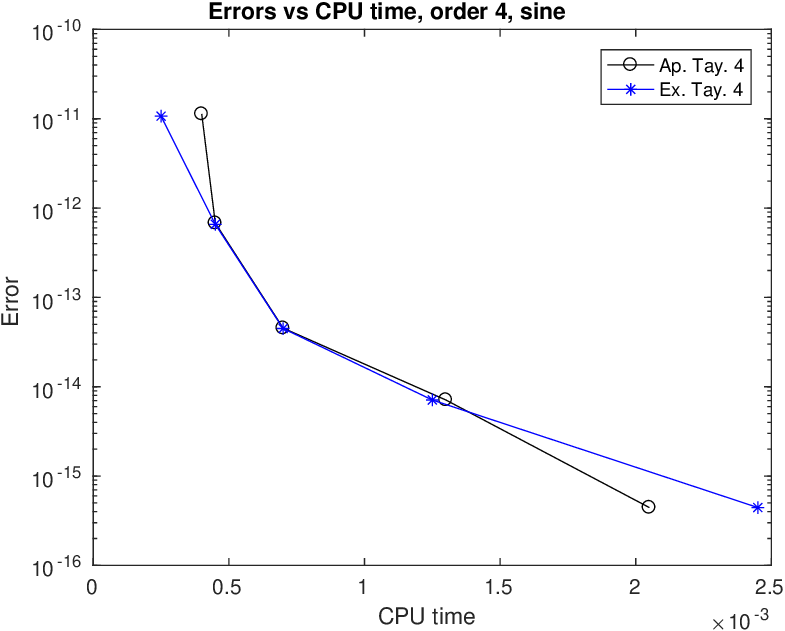}&
      \includegraphics[width=0.5\textwidth]{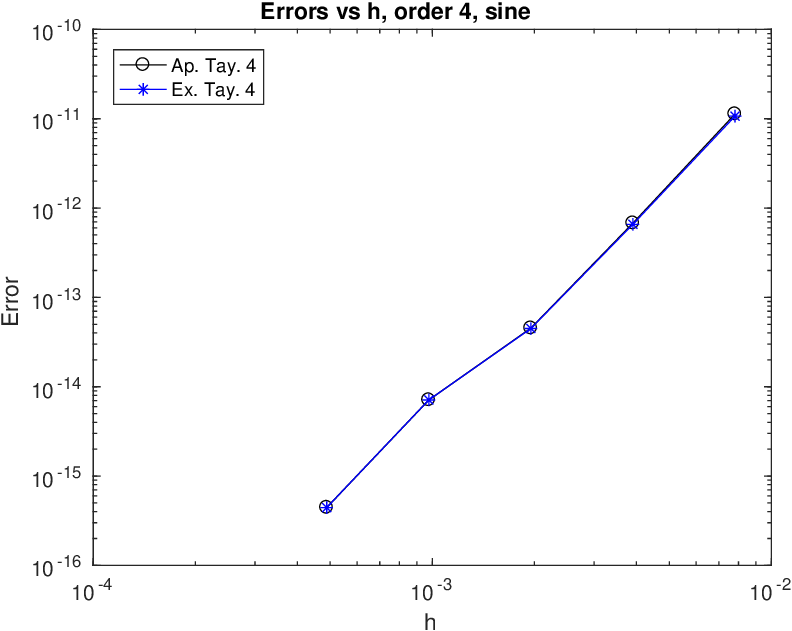}\\
      (c)&(d)\\
      \includegraphics[width=0.5\textwidth]{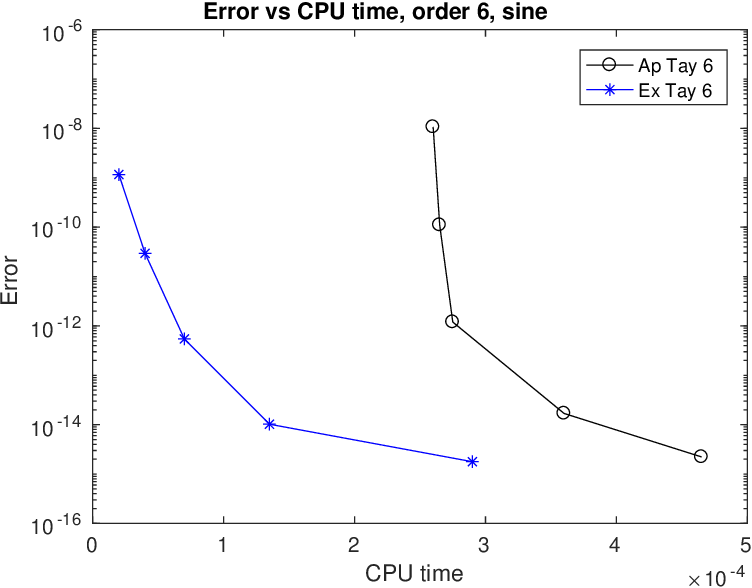}&
      \includegraphics[width=0.5\textwidth]{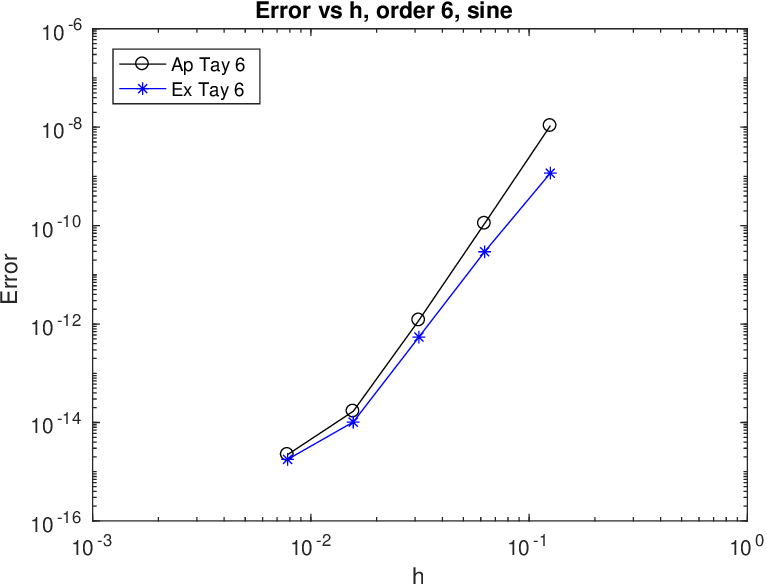}\\
      (e)&(f)	
    \end{tabular}
    \caption{Problem \eqref{eq:ex1}. Left: Error vs CPU time, Right: Error vs. $h$. (a)-(b) $R=2$; (c)-(d) $R=4$; (e)-(f) $R=6$.}
    \label{fig:sin1}
  \end{center}
\end{figure}

The second example is given by a Ricatti equation:
\begin{equation}\label{eq:ex2}
\left\{\begin{array}{l}
u'=- 2\,t\,u + u^2 +t^2 + 1,\\
u(t_0) = u_0,
\end{array}\right.
\end{equation}
with exact solution 
$$u(t) = \frac{1}{C-t}+t,\, C=\frac{1+(u_0-t_0) t_0}{u_0-t_0},\, u_0<t_0.$$

The results for this problem corresponding to $t_0=2, u_0 = 1$, $T=10$ are shown in Figure \ref{fig:ric1}.

\begin{figure}
  \begin{center}
    \begin{tabular}{cc}
      \includegraphics[width=0.5\textwidth]{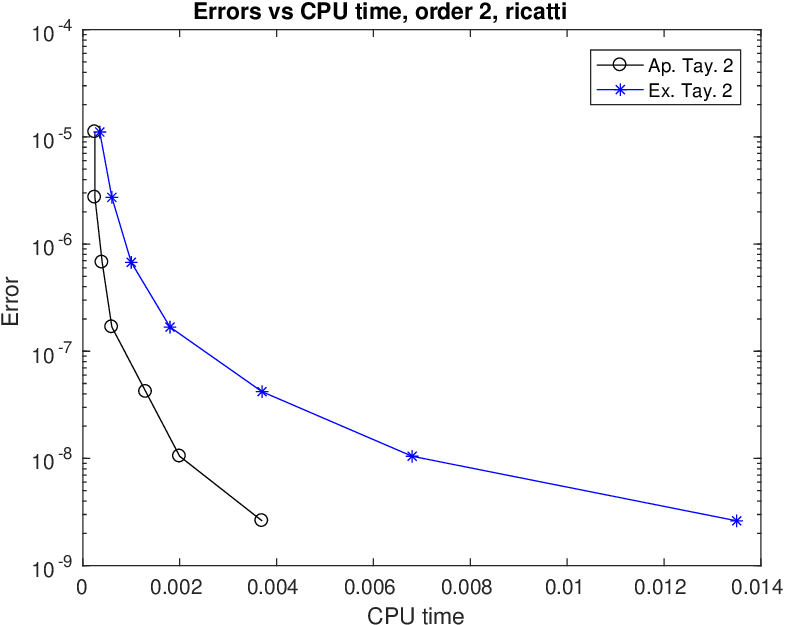}
      &\includegraphics[width=0.5\textwidth]{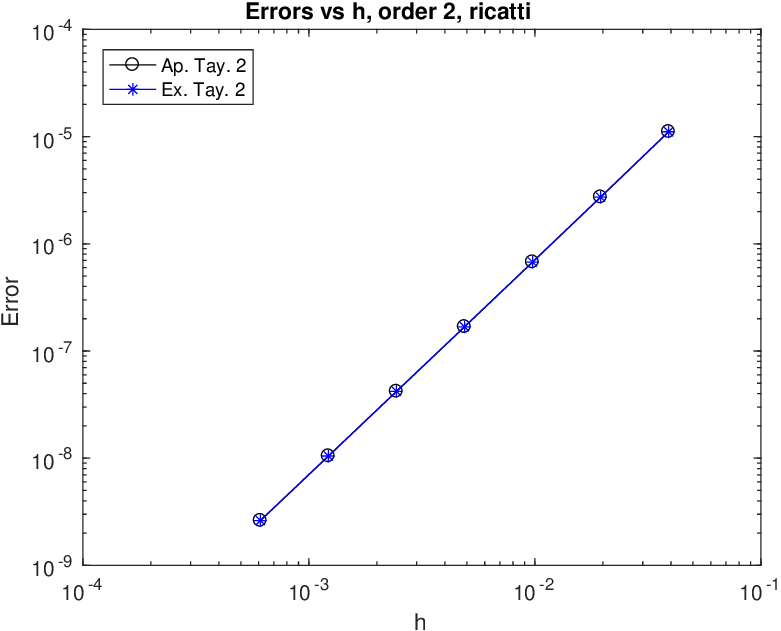}\\
      (a)&(b)\\
      \includegraphics[width=0.5\textwidth]{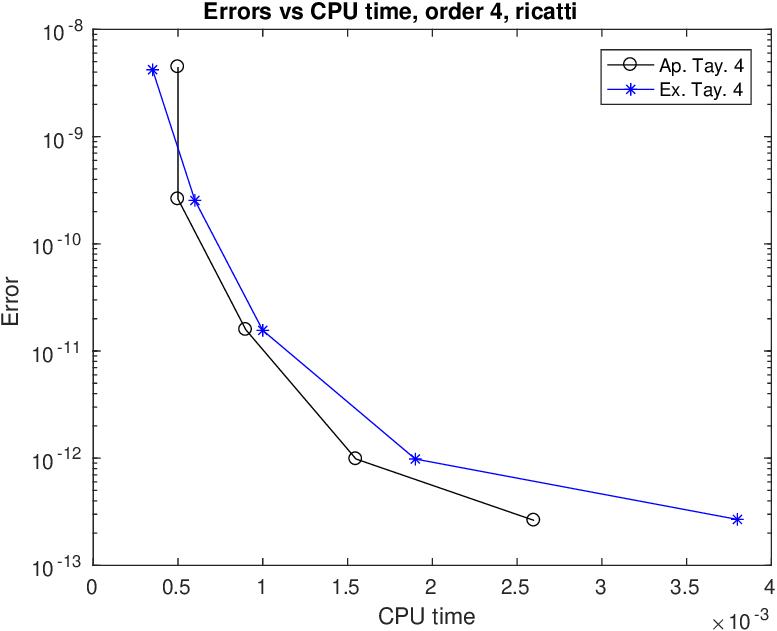}&
      \includegraphics[width=0.5\textwidth]{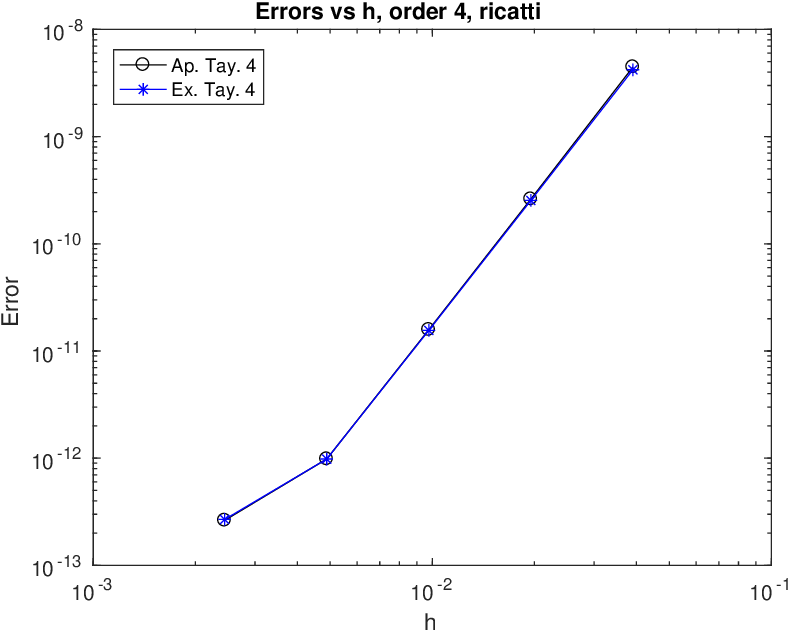}\\
      (c)&(d)\\
      \includegraphics[width=0.5\textwidth]{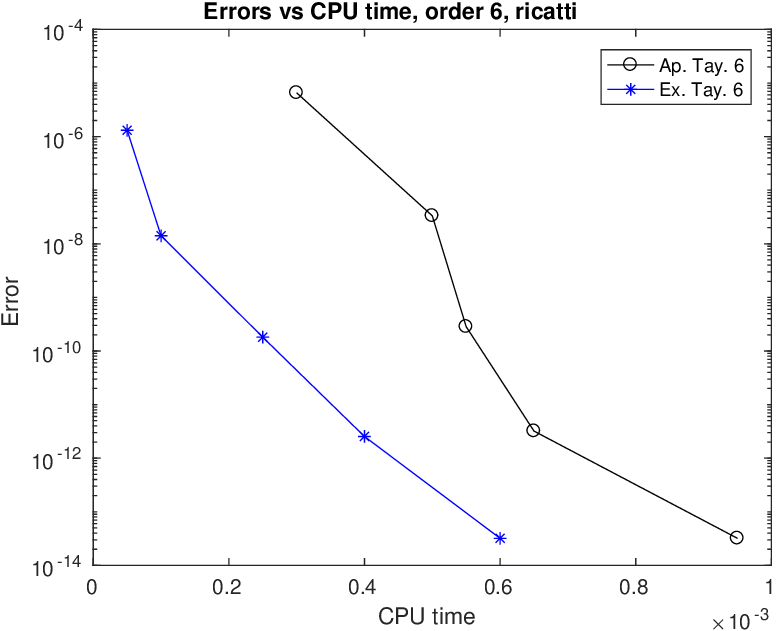}&
      \includegraphics[width=0.5\textwidth]{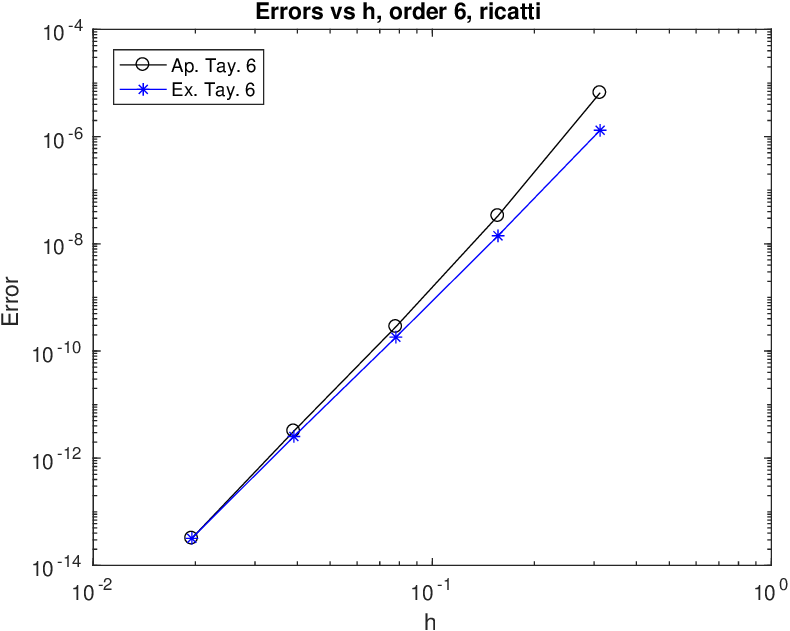}\\
      (e)&(f)	
    \end{tabular}
    \caption{Problem \eqref{eq:ex2}. Left: Error vs CPU time, Right: Error vs. $h$. (a)-(b) $R=2$; (c)-(d) $R=4$; (e)-(f) $R=6$.}
    \label{fig:ric1}
  \end{center}
\end{figure}

The last scalar example corresponds to an equation involving a logarithm:

\begin{equation}\label{eq:ex3}
\left\{\begin{array}{l}
u'=\frac{u}{t} \log\left(\frac{u}{t}\right),\\
u(t_0) = u_0.
\end{array}\right.
\end{equation}

The exact solution of \eqref{eq:ex3} is given by $$u(t) = t e^{Ct+1}, \quad C=\frac{\log\left(\frac{y0}{t0}\right)-1}{t0}, \quad t_0>0, u_0>0.$$

 Figure \ref{fig:log1} shows the results obtained for the case $t_0=1, u_0 = 1, T=8$.

\begin{figure}
  \begin{center}
    \begin{tabular}{cc}
      \includegraphics[width=0.5\textwidth]{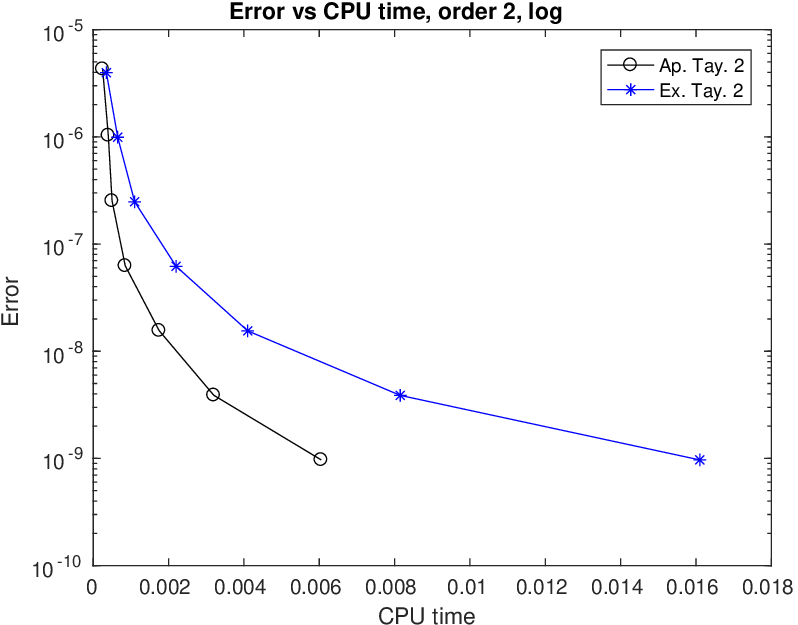}
      &\includegraphics[width=0.5\textwidth]{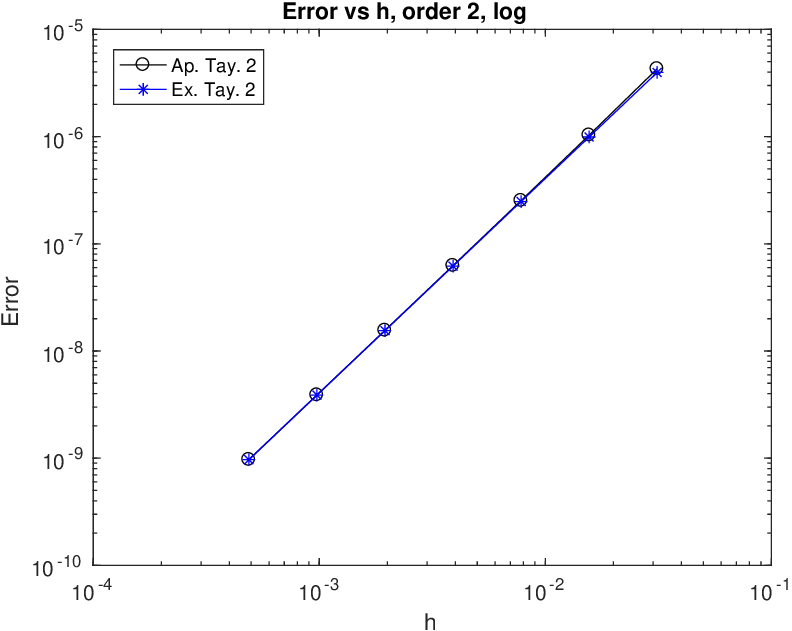}\\
      (a)&(b)\\
      \includegraphics[width=0.5\textwidth]{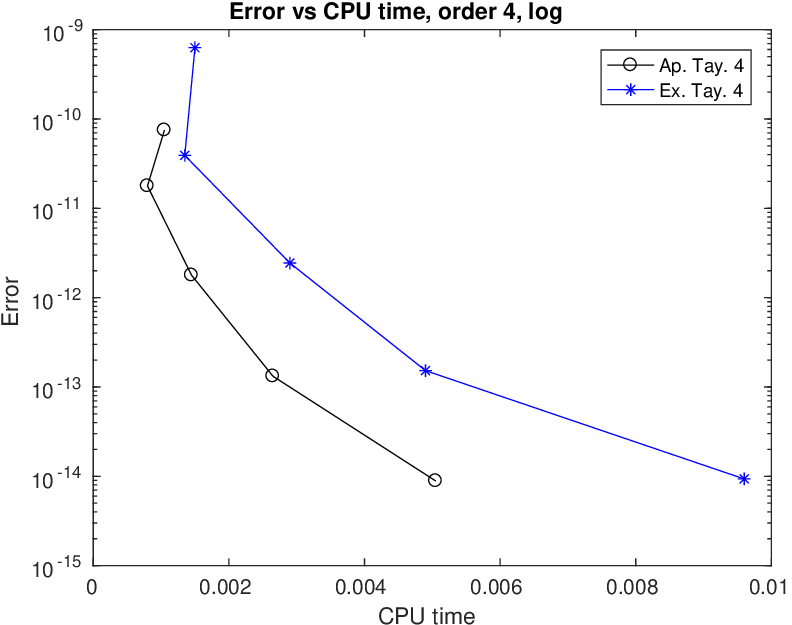}&
      \includegraphics[width=0.5\textwidth]{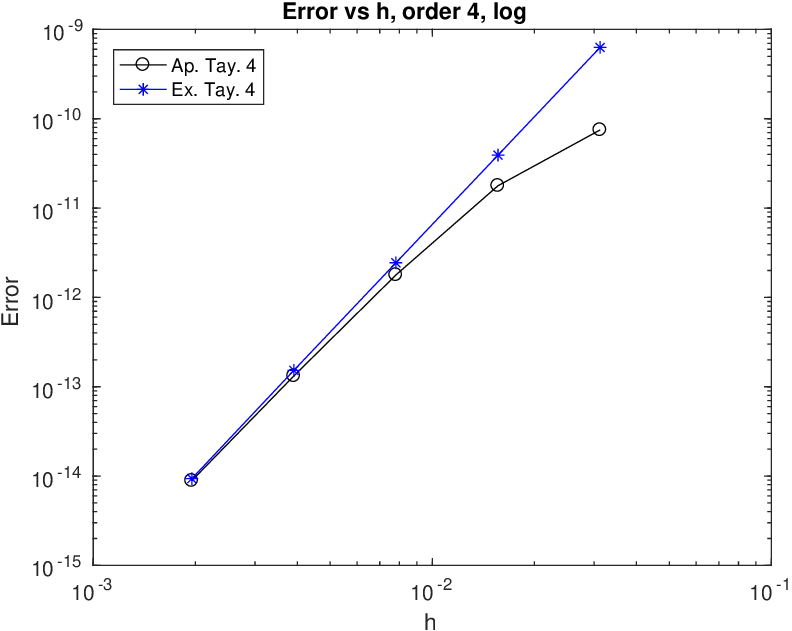}\\
      (c)&(d)\\
      \includegraphics[width=0.5\textwidth]{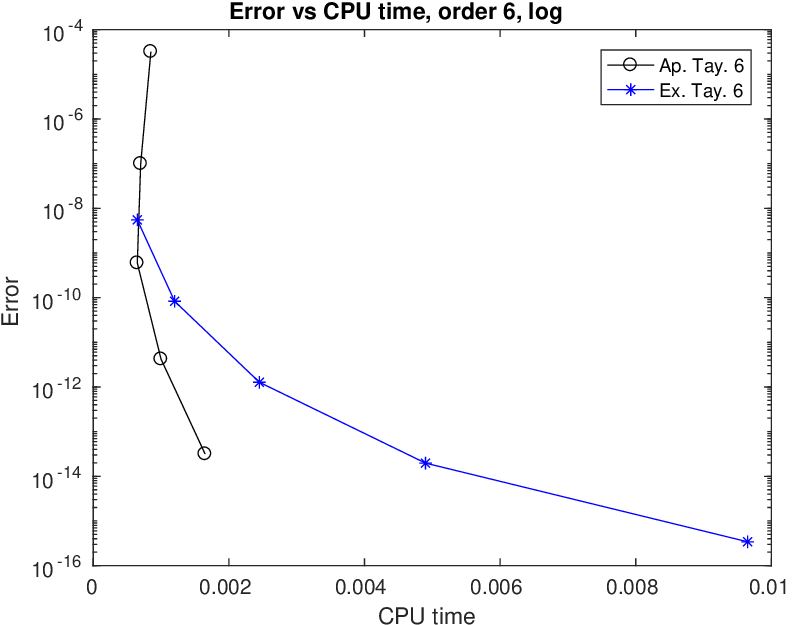}&
      \includegraphics[width=0.5\textwidth]{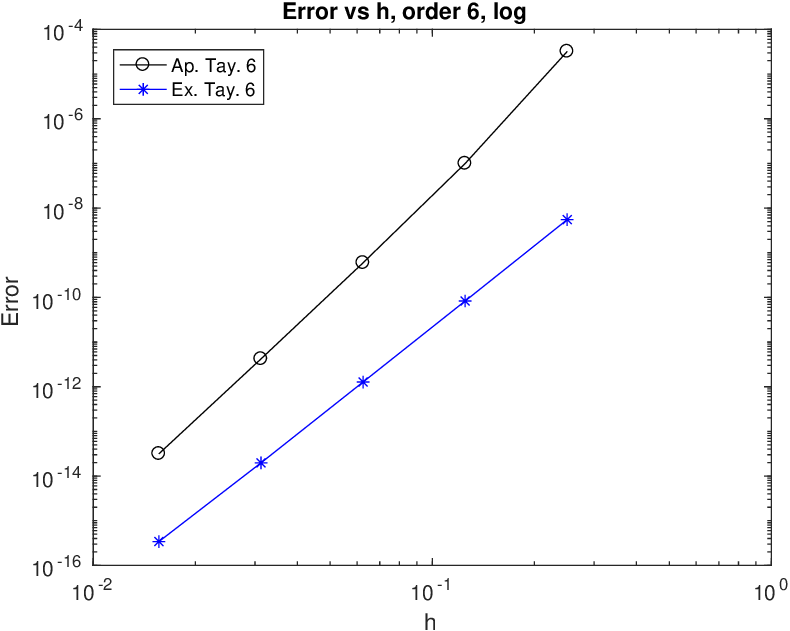}\\
      (e)&(f)	
    \end{tabular}
        \caption{Problem \eqref{eq:ex3}. Left: Error vs CPU time, Right: Error vs. $h$. (a)-(b) $R=2$; (c)-(d) $R=4$; (e)-(f) $R=6$.}
    \label{fig:log1}
  \end{center}
\end{figure}

From these experiments we observe that for both problems \eqref{eq:ex1} and \eqref{eq:ex2} the approximate method
outperforms the exact one for low orders, being the latter more
competitive as the order increases.  
This is probably due to the fact that the terms that appear in the Taylor expansion of $u$ involve 
many repeated trigonometric functions that are evaluated at the same point, thus taking advantage of 
simplifications in the expressions. In contrast, in the third example, corresponding to problem \eqref{eq:ex3}
 the approximate method obtains a sustained higher
 performance, as the Taylor terms in this case involve rational functions that contain a large number of 
 factors and do not simplify as easily as in the other examples. 
 We also observe in the right plots corresponding to the third example a discrepancy in 
 the absolute errors obtained by both methods that was barely observable in the previous examples.

\subsection{Systems of ODE}

We consider two problems modeled by systems of ODE. The first example models a pendulum suspended on an elastic 
spring or cord. Its position $(r_1(t), r_2(t))$ and 
velocity $(v_1(t), v_2(t))$ can be modeled with the following system:

\begin{equation}\label{eq:pendol}
\begin{array}{l}
r_1'=v_1,\\
r_2'=v_2,\\
v_1'=k_1(\frac{1}{\sqrt{r_1^2+r_2^2}}-1)r_1-k_2 v_1, \\
v_2'=k_1(\frac{1}{\sqrt{r_1^2+r_2^2}}-1)r_2-k_2 v_2 -g,
\end{array}
\end{equation}
where $k_1$ defines the elasticity of the cord, $k_2$ is related to
the friction of the pendulum and $g$ is the gravitational
acceleration. The initial conditions considered in this experiment are
$r_1(0)=0.7\,{\rm m}$,\ $r_2(0)=-0.8\,{\rm m}$,\ $v_1(0)=0.1\,{\rm m}/{\rm s}$, and $v_2(0)=-0.6\,{\rm m}/{\rm s}$,
for time $T=10\,$s and the parameters are taken as 
$m=1 \,{\rm Kg}$, $g=9.81 \,{\rm m}/{\rm s}^2$, $ k_1=100 \,{\rm N}/{\rm m}, k_2=1 \,{\rm Kg}/{\rm s}$. The results are
shown 
in Figure \ref{fig:pen1}.

\begin{figure}
  \begin{center}
    \begin{tabular}{cc}
      \includegraphics[width=0.5\textwidth]{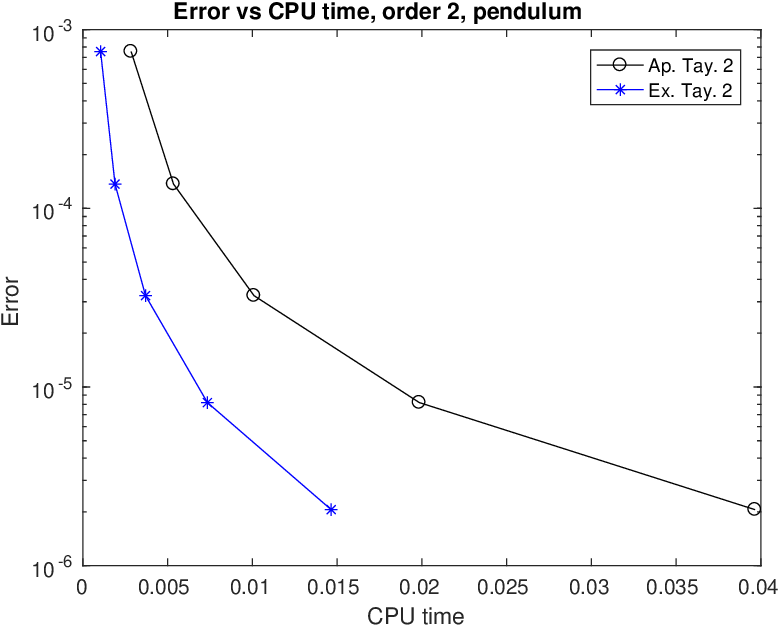}
      &\includegraphics[width=0.5\textwidth]{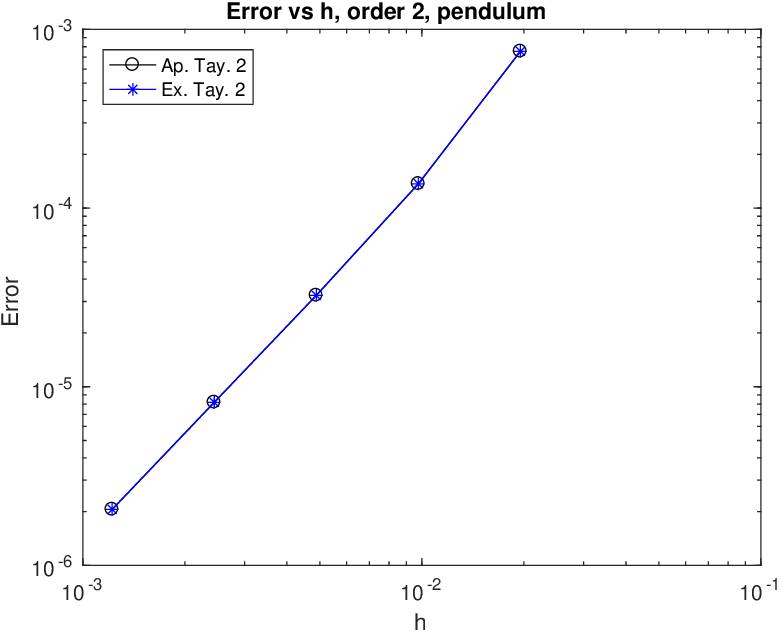}\\
      (a)&(b)\\
      \includegraphics[width=0.5\textwidth]{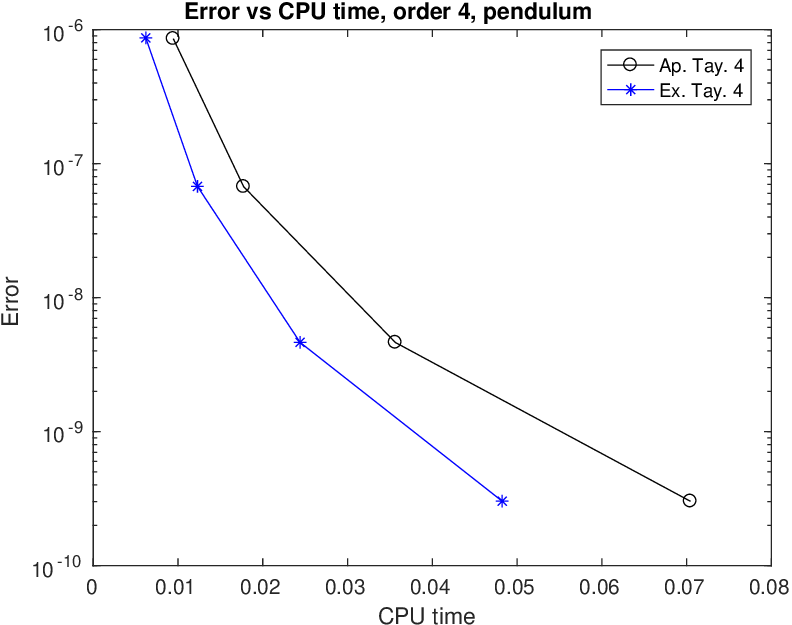}&
      \includegraphics[width=0.5\textwidth]{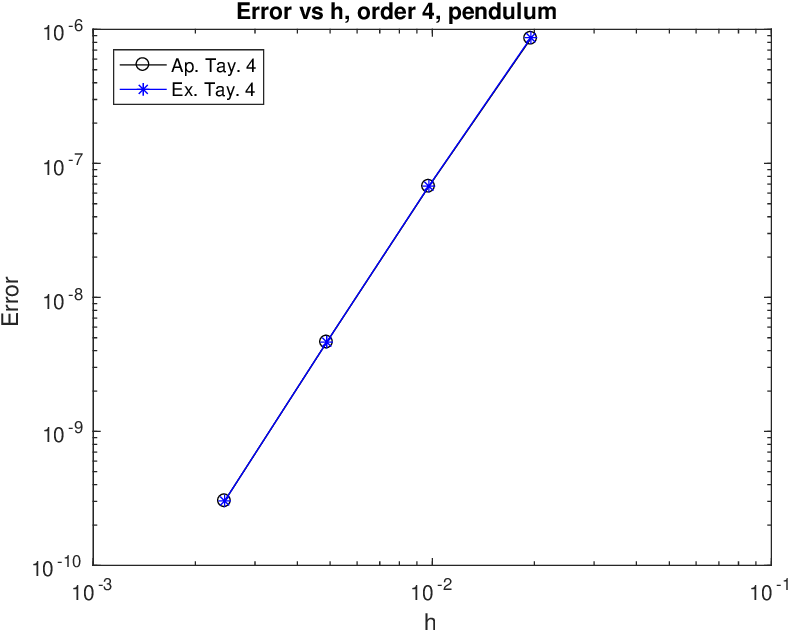}\\
      (c)&(d)\\
      \includegraphics[width=0.5\textwidth]{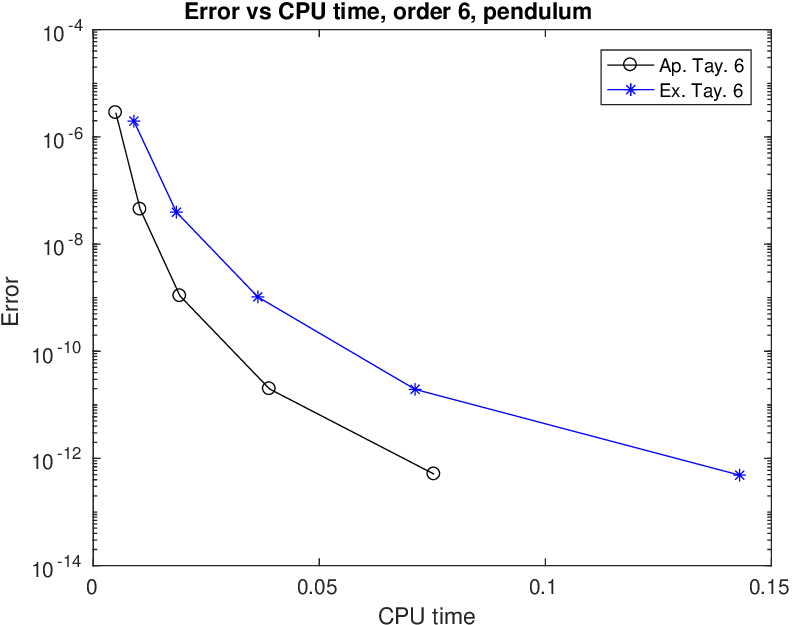}&
      \includegraphics[width=0.5\textwidth]{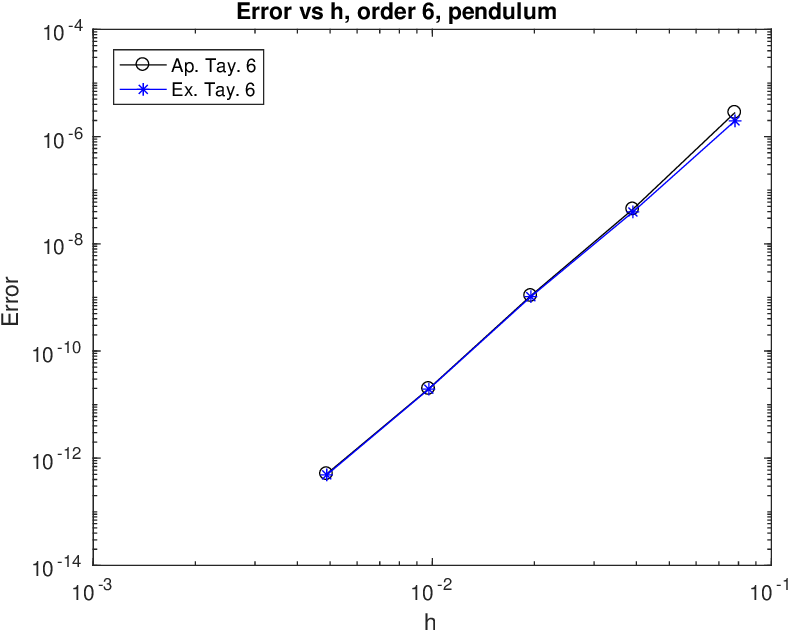}\\
      (e)&(f)\\
      \includegraphics[width=0.5\textwidth]{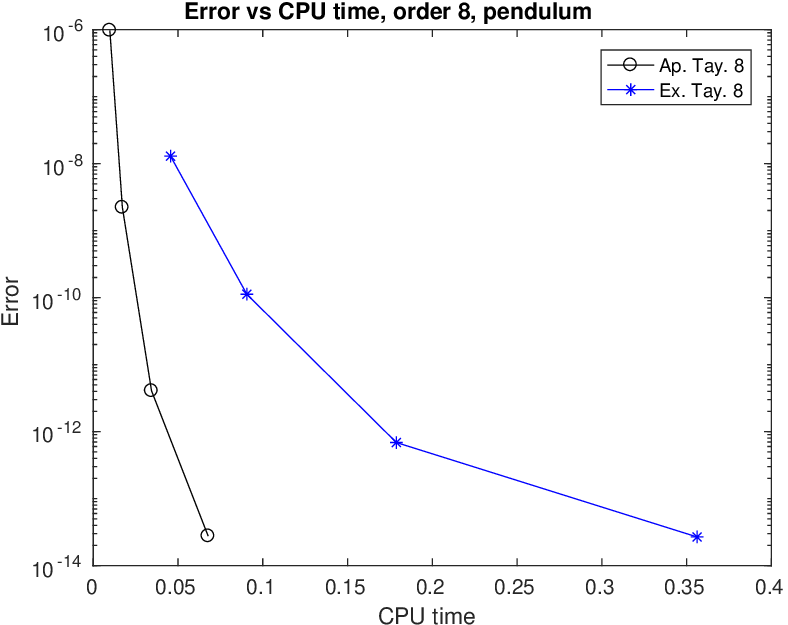}&
      \includegraphics[width=0.5\textwidth]{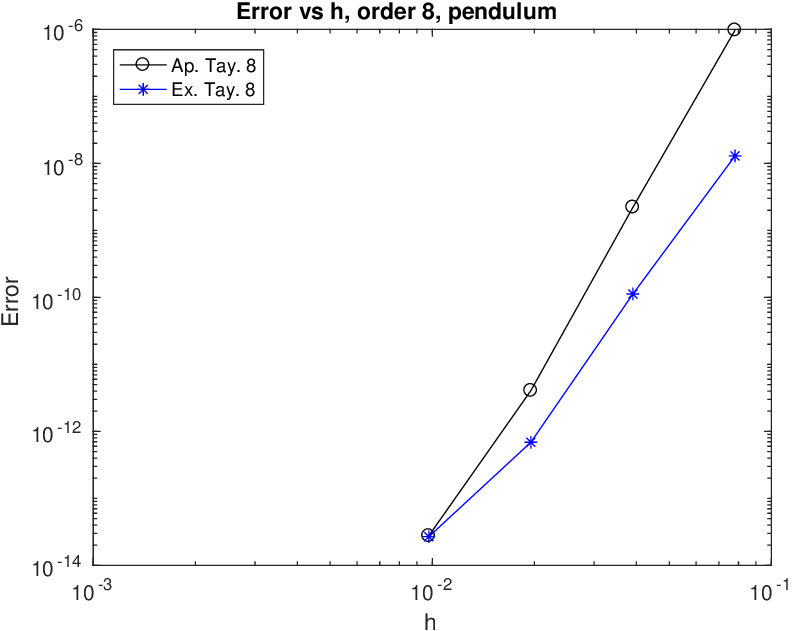}\\
      (g)&(h)	
    \end{tabular}
    \caption{Problem \eqref{eq:pendol}. Left: Error vs CPU time, Right: Error vs. $h$. (a)-(b) $R=2$; (c)-(d) $R=4$; (e)-(f) $R=6$;
    (g)-(h) $R=8$.}
    \label{fig:pen1}
  \end{center}
\end{figure}

Our next example corresponds to a biological model related to gene regulation, known as \emph{toggle switch}. 
The toggle switch models the interaction of two genes, indexed $L$ and $R$ that repress mutually:
\begin{equation}\label{eq:toggle}
\begin{aligned}
m_L' &= k_{L,1}\frac{\displaystyle K_R^{n_R}}{\displaystyle K_R^{n_R}+p_R^{n_R}}-d_{L,1}m_L,\\
p_L'&=k_{L,2}m_L-d_{L,2}p_L,\\
m_R' &= k_{R,1}\frac{\displaystyle K_L^{n_L}}{\displaystyle K_L^{n_L}+p_L^{n_L}}-d_{R,1}m_R,\\
p_R'&=k_{R,2}m_R-d_{R,2}p_R.
\end{aligned}
\end{equation}
 Quantities $m_{\{L,R\}}$
represent the concentration for gene $\{L,R\}$ Messenger Ribonucleic Acid (mRNA)  and $p_{\{L,R\}}$ the concentration of gene $\{L,R\}$ protein. The parameters
 are all set to 1 except $n_L = n_R = 2, k_{L,1} = k_{R,1}=10$, with initial conditions 
 $m_L(0)=0.5, p_L(0)=0.4, m_R(0)=0.5, p_R(0)=0.3$, for $T=10$.
Results for this problem can be seen in Figure \ref{fig:tog1}.

\begin{figure}
  \begin{center}
    \begin{tabular}{cc}
      \includegraphics[width=0.5\textwidth]{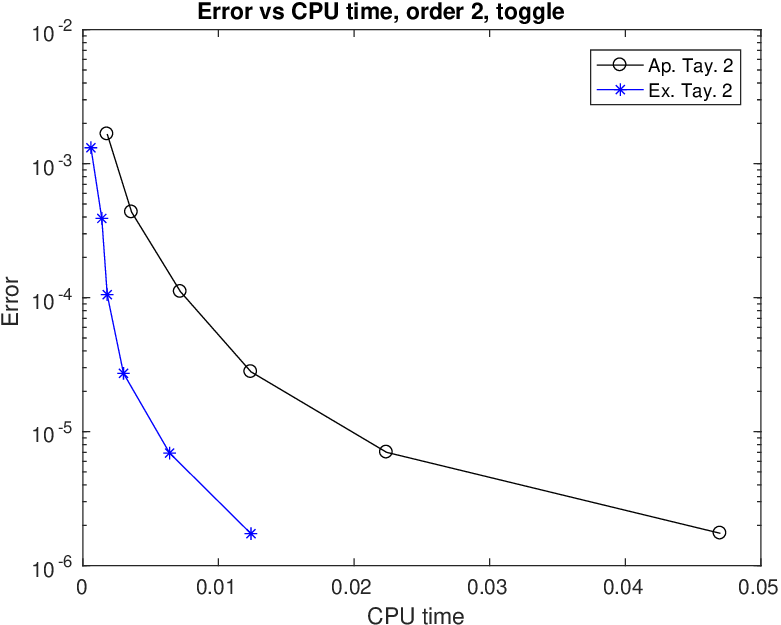}
      &\includegraphics[width=0.5\textwidth]{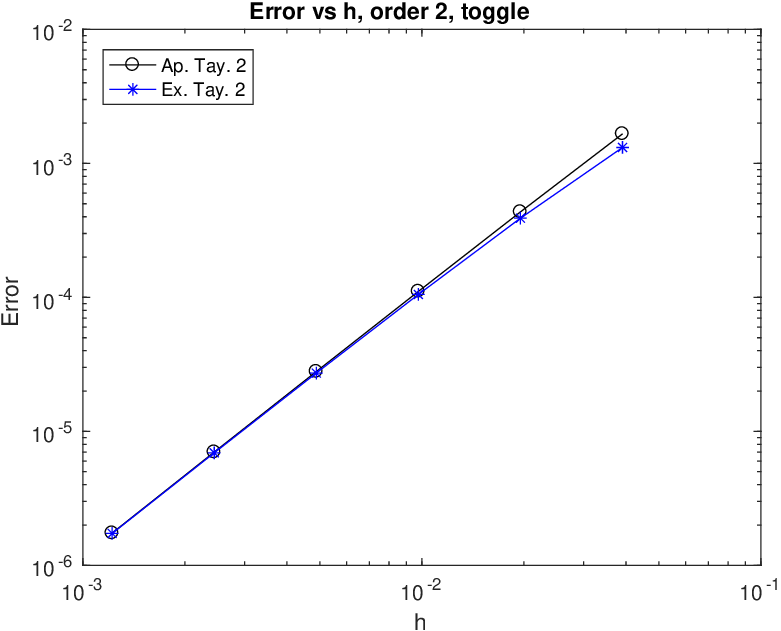}\\
      (a)&(b)\\
      \includegraphics[width=0.5\textwidth]{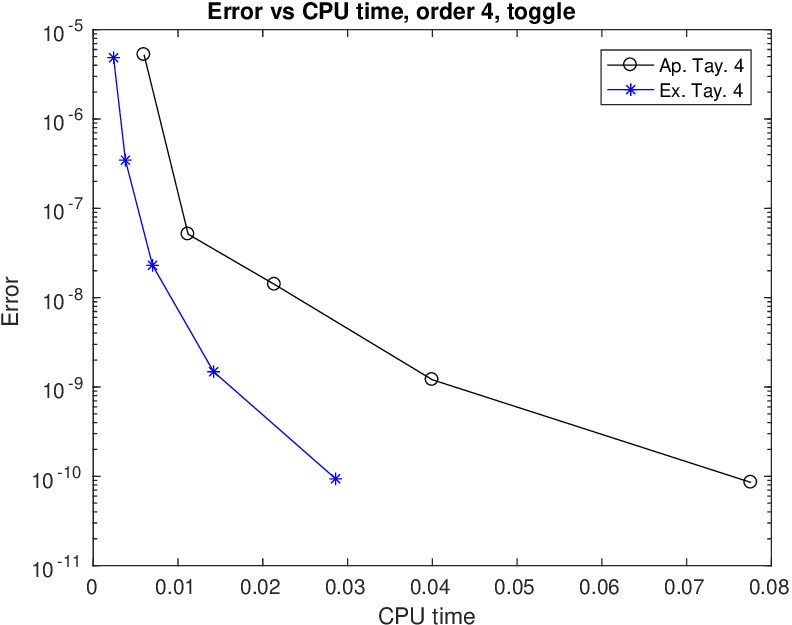}&
      \includegraphics[width=0.5\textwidth]{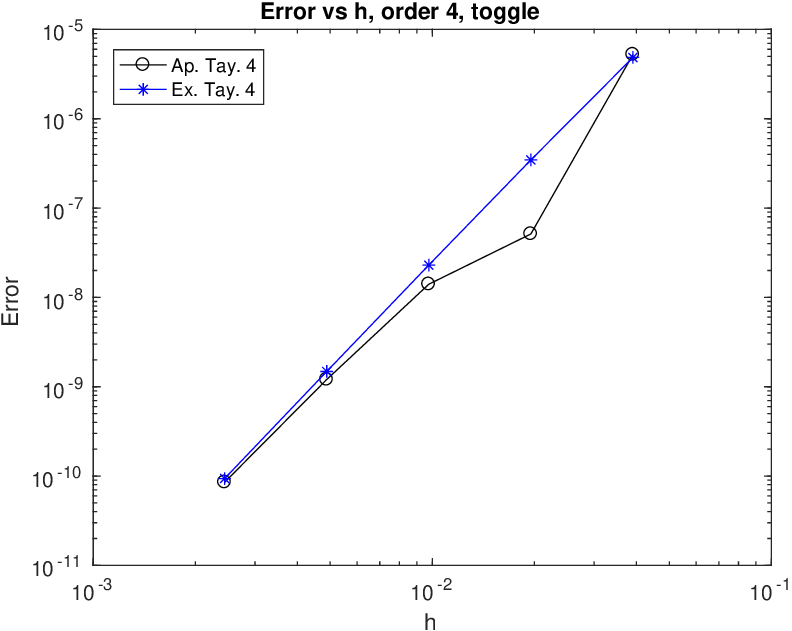}\\
      (c)&(d)\\
      \includegraphics[width=0.5\textwidth]{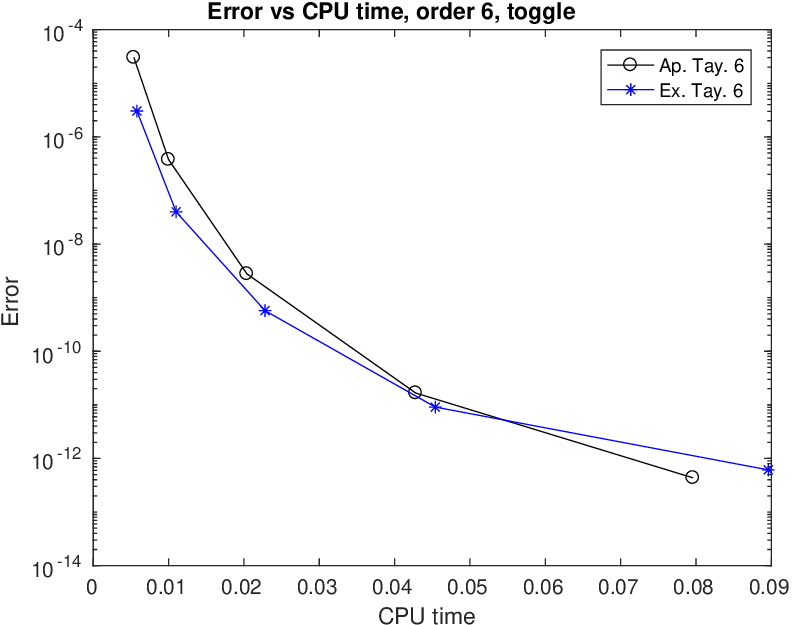}&
      \includegraphics[width=0.5\textwidth]{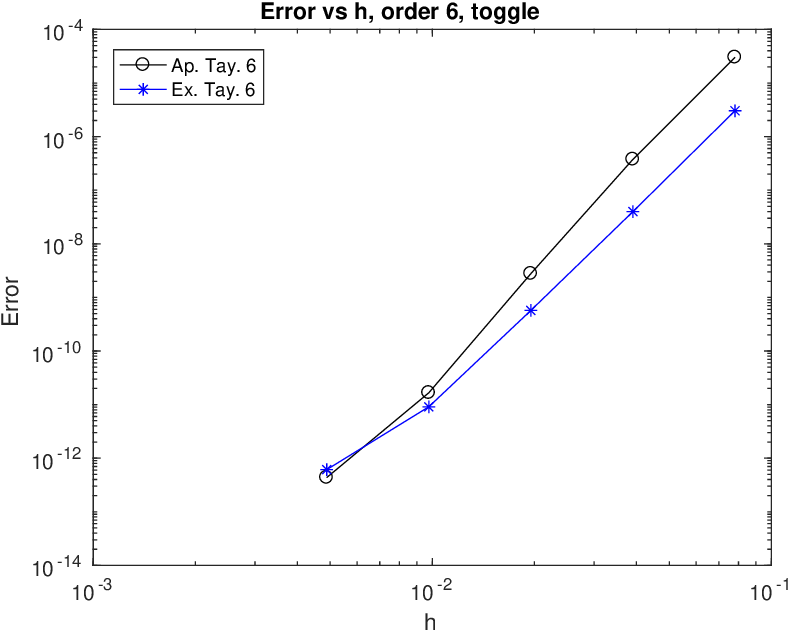}\\
      (e)&(f)\\
      \includegraphics[width=0.5\textwidth]{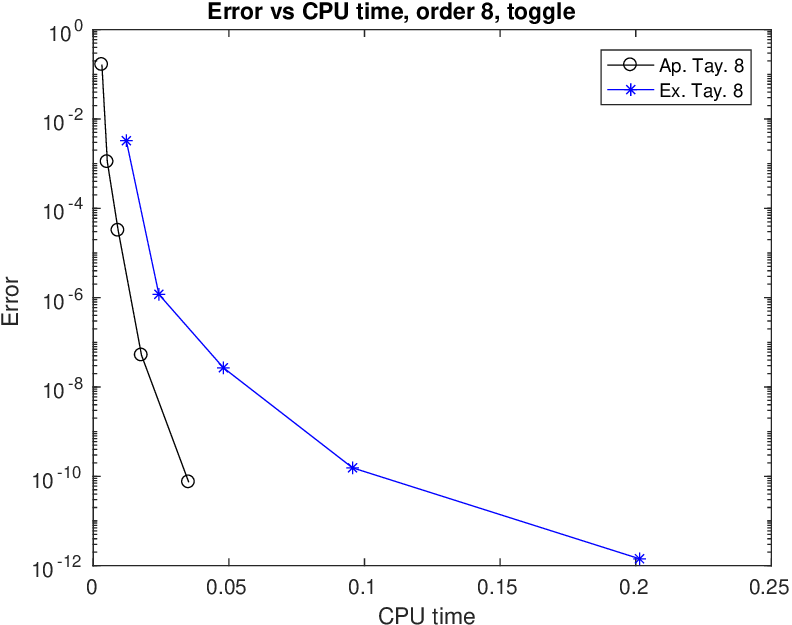}&
      \includegraphics[width=0.5\textwidth]{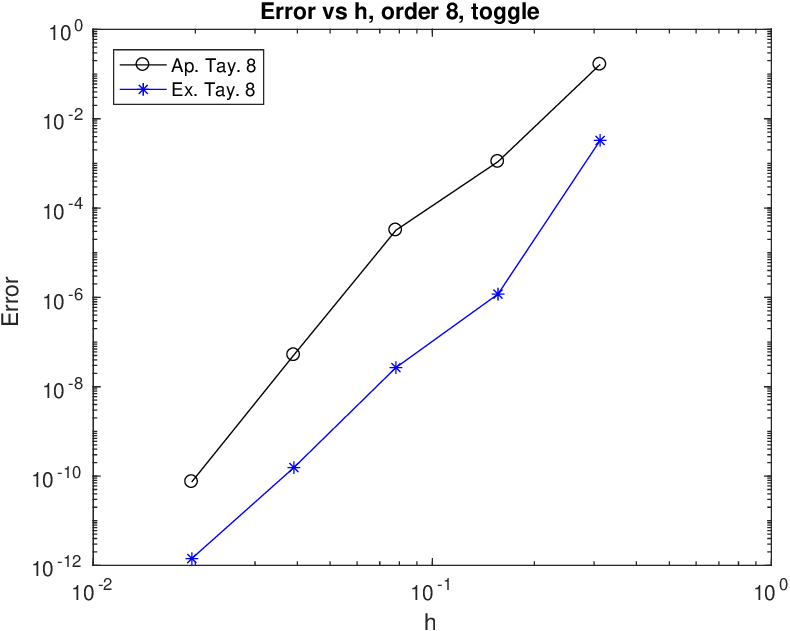}\\
      (g)&(h)	
    \end{tabular}
        \caption{Problem \eqref{eq:toggle}. Left: Error vs CPU time, Right: Error vs. $h$. (a)-(b) $R=2$; (c)-(d) $R=4$; (e)-(f) $R=6$;
    (g)-(h) $R=8$.}
    \label{fig:tog1}
  \end{center}
\end{figure}

  We consider also systems of ODE defined through homogeneous rational functions as used in Section \ref{sec:cc} to 
analyze the computational complexity of exact Taylor methods. Figure \ref{fig:rat1} shows the 
results corresponding to the solution of \eqref{eq:806} for the case $m=6$ for orders $2$, $4$, and $6$, for
time $T=10$ and initial conditions $u_i(0)=1,\ i=1,\dots,m$.

  \begin{figure}
  	\begin{center}
  		\begin{tabular}{cc}
  			\includegraphics[width=0.5\textwidth]{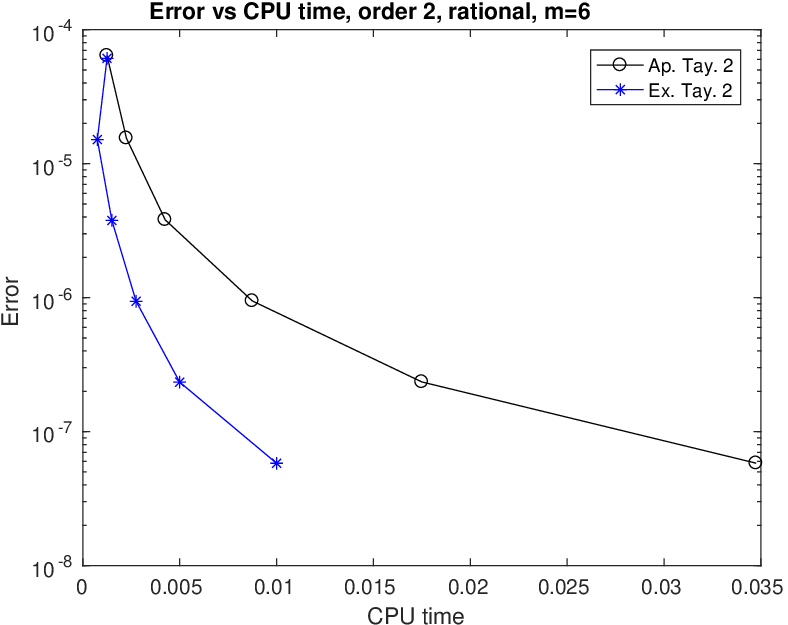}
  			&\includegraphics[width=0.5\textwidth]{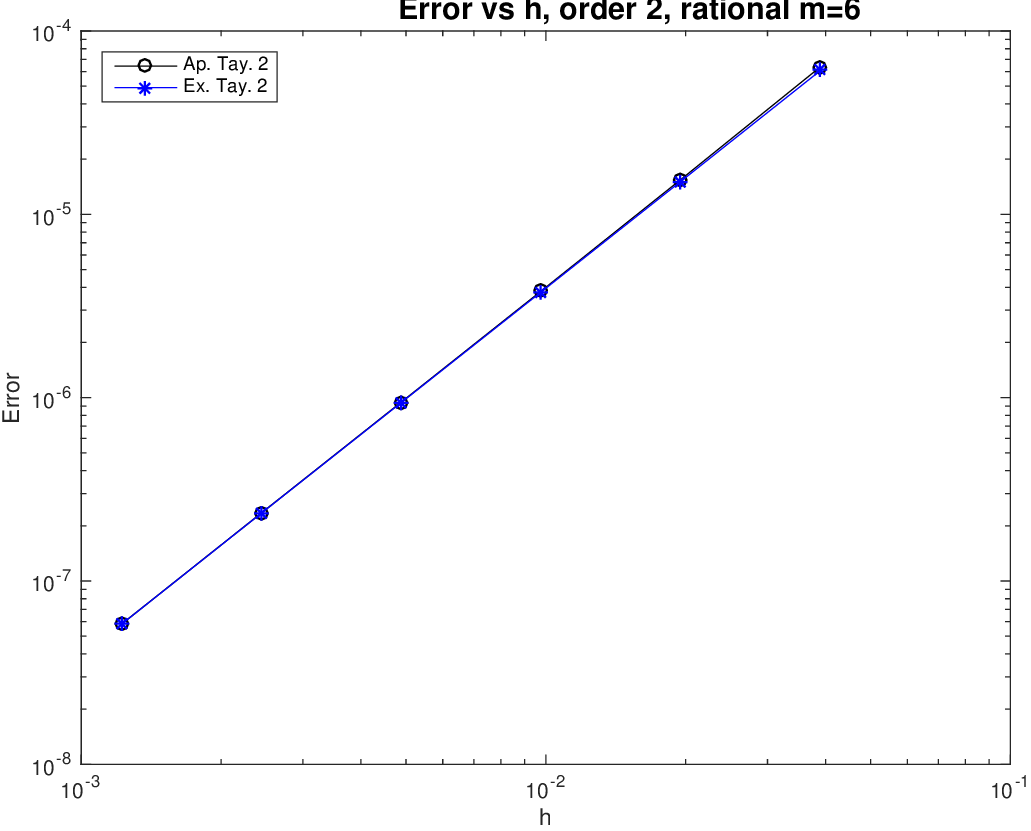}\\
  			(a)&(b)\\
  			\includegraphics[width=0.5\textwidth]{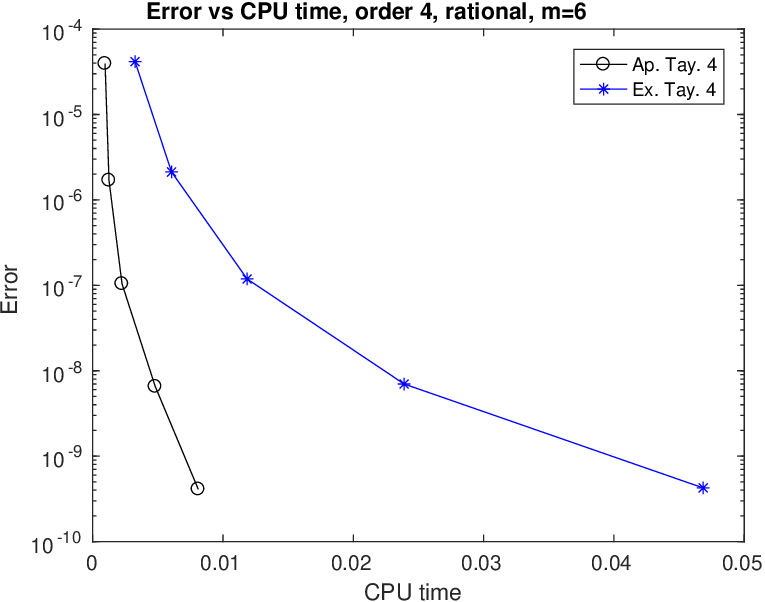}&
  			\includegraphics[width=0.5\textwidth]{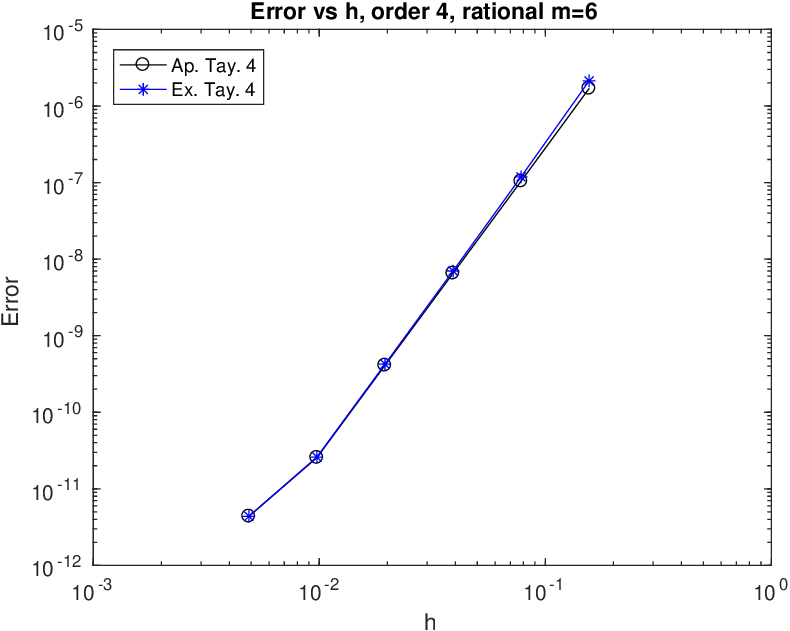}\\
  			(c)&(d)\\
  			\includegraphics[width=0.5\textwidth]{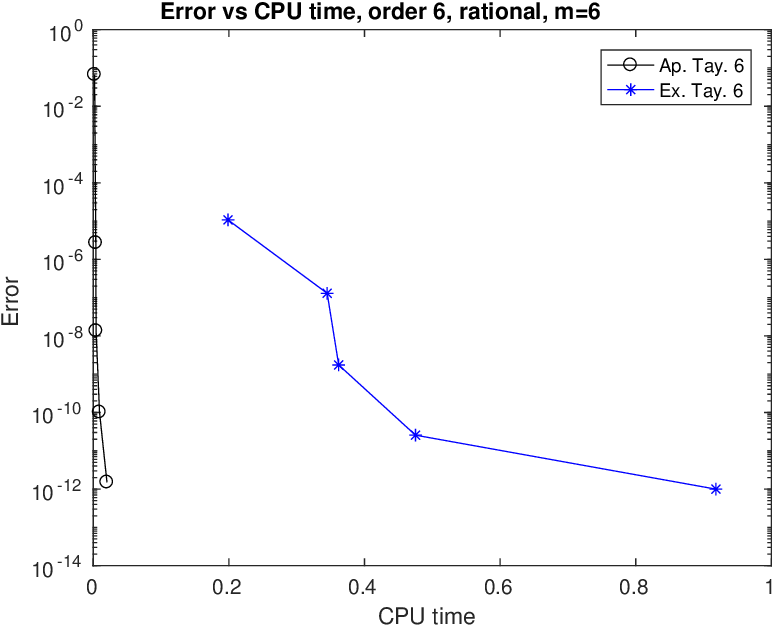}&
  			\includegraphics[width=0.5\textwidth]{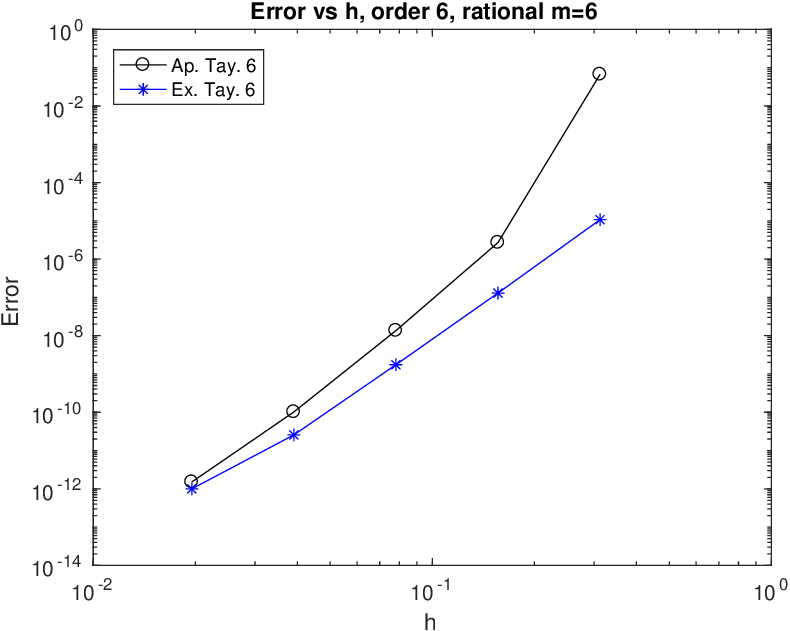}\\
  			(e)&(f)
  		\end{tabular}
  		\caption{Problem \eqref{eq:806}. Left: Error vs CPU time, Right: Error vs. $h$. (a)-(b) $R=2$; (c)-(d) $R=4$; (e)-(f) $R=6.$}
  		\label{fig:rat1}
  	\end{center}
  \end{figure}
  
\begin{table}
$$
\begin{array}{|r|r|r|r|r|}
  \hline
  &\multicolumn{4}{c|}{m}\\\hline
R&4 & 5 &6 &7  \\\hline
2 & 0.2288& 0.2564& 0.3026   & 0.4157  \\\hline
4 & 1.0531 &   2.5117& 5.6411 &  14.9425  \\\hline
6 & 8.4649   &28.6859& 58.5005&  110.6603  \\\hline
\end{array}
$$

\caption{Ratio of CPU time between the computational costs of
  exact and approximate $R$-th order Taylor methods for the rational system
  \eqref{eq:806}.
}
\label{tab:2}
\end{table}

In the case of systems we observe an increasing performance of the
approximate version  with respect to the exact version as the order increases.

In the last test  we consider the comparison of the numerical solution of \eqref{eq:806}
with the classical fourth-order Runge-Kutta method and the
fourth-order approximate Taylor method. We consider  the case $m=4$ for
time $T=10$ and initial conditions $u_i(0)=1,\ i=1,\dots,m$. Global errors are approximated from
a reference solution computed by the fourth order Runge-Kutta method
for $n=1000000$ time steps.

In  Figure \ref{fig:cmprk4rat1} we display the results of this
comparison. The expected conclusion is that the fourth-order Runge-Kutta method is
more efficient than the corresponding approximate Taylor method and
that both have fourth order global errors.

\begin{figure}
  \begin{center}
    \begin{tabular}{cc}
      \includegraphics[width=0.5\textwidth]{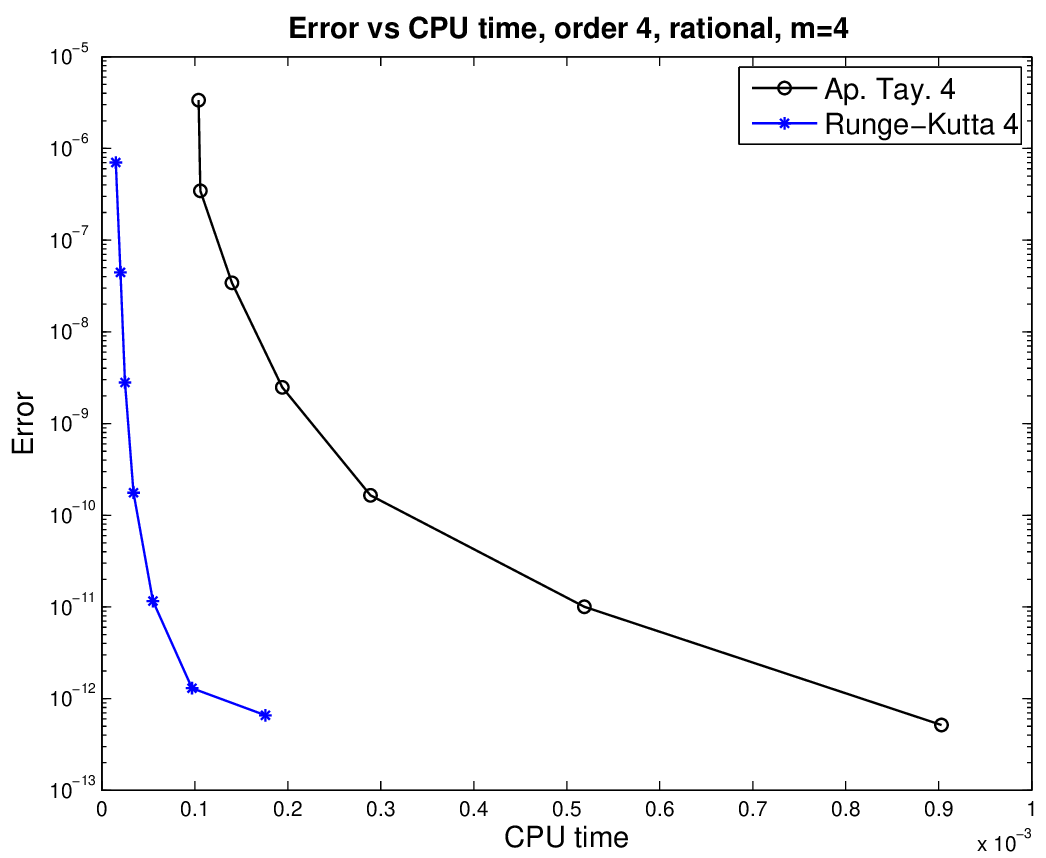}
      &\includegraphics[width=0.5\textwidth]{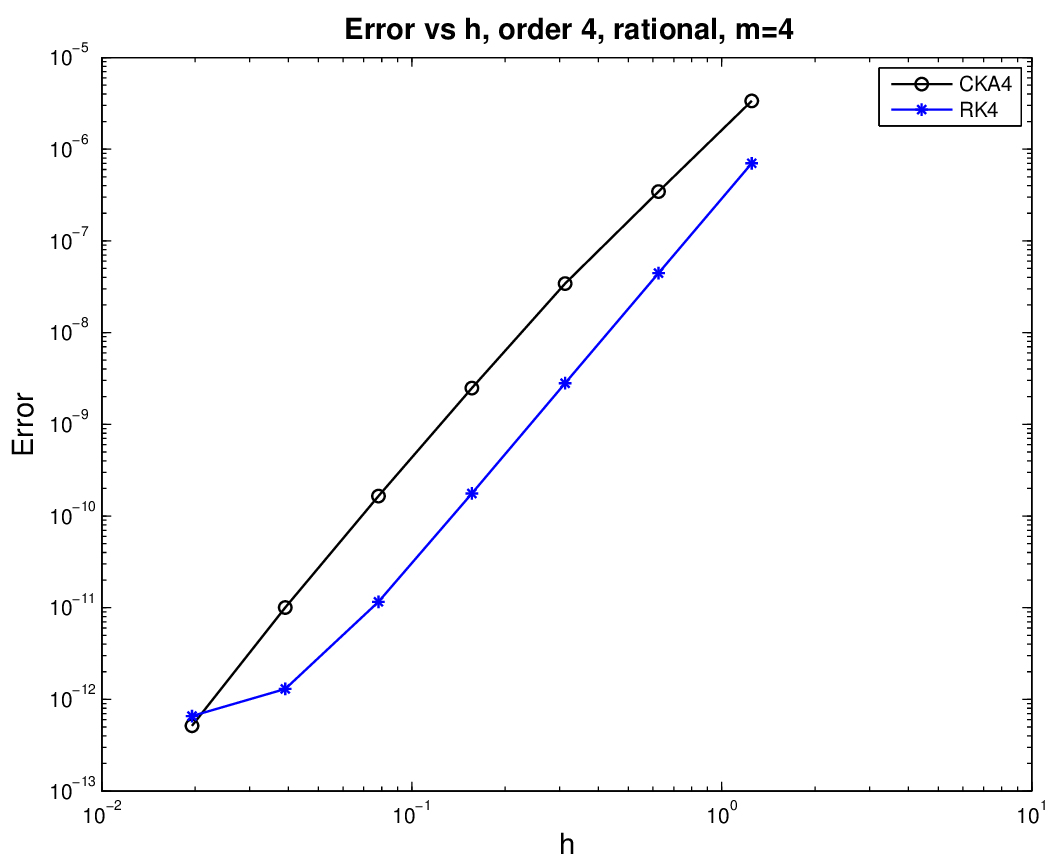}\\
      (a)&(b)
    \end{tabular}
  		\caption{Problem \eqref{eq:806}. Left: Error vs CPU
                  time, Right: Error vs. $h$. $R=4$, rational function $m=4$.}
  		\label{fig:cmprk4rat1}
  \end{center}
\end{figure}

\section{Conclusions}\label{cnc}
We have proposed a technique to obtain  ODE integrators  of arbitrary
order that can be regarded as  approximate Taylor methods. The
clear advantage of our proposal with respect to exact Taylor methods
is that only function evaluations are needed, so the cost of
implementation is very low. We have performed some
numerical experiments that show that  approximate Taylor methods
achieve the design order of accuracy,  both methods have roughly the same
accuracy and that approximate methods can be more computationally
efficient  in some cases, specially for moderate or large tightly
coupled ODE systems. As future research along the subject of approximate Taylor methods, we
consider the study of their relationship with Runge-Kutta methods and
their stability.

\section*{Acknowledgments}
Antonio Baeza, Pep Mulet and David Zor\'{\i}o are 
  supported by Spanish MINECO grant MTM 2014-54388-P. David Zor\'{\i}o is also
  supported by Fondecyt project 3170077. Giovanni Russo and Sebastiano Boscarino have been partially supported by ITN-ETN Horizon 2020 Project {\em ModCompShock, Modeling and Computation on Shocks and Interfaces}, Project Reference 642768, by the National Group for Scientific Computing INdAM-GNCS project 2017: \emph{Numerical methods for hyperbolic and kinetic equation and applications}.

\end{document}